\newtheorem{prethm}{{\bf Theorem}}
\newenvironment{thm}{\begin{prethm}{\hspace{-0.5
               em}{\bf.}}}{\end{prethm}}
\newtheorem{prepro}[prethm]{Proposition}
\newenvironment{pro}{\begin{prepro}{\hspace{-0.5
               em}{\bf.}}}{\end{prepro}}
\newtheorem{preconj}[prethm]{Conjecture}
\newenvironment{conj}{\begin{preconj}{\hspace{-0.5
               em}{\bf.}}}{\end{preconj}}
\newtheorem{prelem}[prethm]{Lemma}
\newenvironment{lem}{\begin{prelem}{\hspace{-0.5
               em}{\bf.}}}{\end{prelem}}
\newtheorem{precor}[prethm]{Corollary}
\newenvironment{cor}{\begin{precor}{\hspace{-0.5
               em}{\bf.}}}{\end{precor}}
\newtheorem{prerem}[prethm]{{\bf Remark}}
\newtheorem{preexample}{{\bf Example}}
\newtheorem{preproof}{{\bf Proof.}}
\newenvironment{proof}[1]{\begin{preproof}{\rm
               #1}\hfill{$\Box$}}{\end{preproof}}
\newcommand{\noi}{\noindent}
\newcommand{\ov}{\overline}
\newcommand{\Gb}{{\overline G}}
\renewcommand{\thefootnote}
\title{Nordhaus--Gaddum type inequalities for Laplacian and signless Laplacian eigenvalues}
\author{ F. Ashraf$^{\,\rm 1, 2}$~~~ B. Tayfeh-Rezaie$^{\,\rm 2,}$\thanks{Corresponding author, email: {\tt tayfeh-r@ipm.ir}} \vspace{.4cm}\\
{\sl $^{\rm 1}$Department of Mathematical Sciences, Isfahan University of Technology,}\\
{\sl Isfahan, 84156-83111, Iran}\\
{\sl $^{\rm 2}$School of Mathematics, Institute
for Research in Fundamental Sciences (IPM),} \\
 {\sl  P.O. Box 19395-5746, Tehran, Iran}\\
}
\date{}
\begin{document}
\maketitle

\begin{abstract}
Let $G$ be a graph with $n$ vertices. We denote the largest  signless Laplacian eigenvalue of $G$ by $q_1(G)$ and Laplacian eigenvalues of $G$ by $\mu_1(G)\ge\cdots\ge\mu_{n-1}(G)\ge\mu_n(G)=0$.
It is a conjecture on Laplacian spread of graphs that
$\mu_1(G)-\mu_{n-1}(G)\le n-1$ or equivalently $\mu_1(G)+\mu_1(\Gb)\le2n-1$.
We prove the conjecture for bipartite graphs.
Also we show that for any bipartite graph $G$, $\mu_1(G)\mu_1(\Gb)\le n(n-1)$.
Aouchiche and Hansen [A survey of Nordhaus--Gaddum type relations, Discrete Appl. Math. 161 (2013), 466--546] conjectured that
$q_1(G)+q_1(\Gb)\le3n-4$ and $q_1(G)q_1(\Gb)\le2n(n-2)$.
We prove the former and disprove the latter by constructing a family of graphs $H_n$ where $q_1(H_n)q_1(\ov{H_n})$ is about $2.15n^2+O(n)$.

\vspace{3mm}
\noindent {\em AMS Classification}: 05C50\\
\noindent{\em Keywords}: Signless Laplacian eigenvalues of graphs, Laplacian eigenvalues of graphs, Nordhaus--Gaddum type inequalities, Laplacian spread
\end{abstract}

\section{Introduction}

Let $G$ be a simple graph with vertex set $V(G) = \{v_1,\ldots, v_n\}$.
We denote the complement graph of $G$ by $\ov G$.
 The {\em degree} of a vertex $v\in V(G)$,
denoted by $d(v)$, is the number of neighbors of $v$.
The {\em adjacency matrix} of $G$ is an $n\times n$ matrix $A(G)$ whose $(i,j)$ entry is 1 if $v_i$ and $v_j$ are adjacent and zero otherwise.
The {\em Laplacian matrix} and the {\em signless Laplacian matrix} of $G$ are the matrices
$L(G) =A(G)-D(G)$ and $Q(G) =A(G)+D(G)$, respectively,
where $D(G)$ is the diagonal matrix with $d(v_1),\ldots, d(v_n)$ on its main diagonal.
It is well-known that $L(G)$ and $Q(G)$ are positive semidefinite
and so their eigenvalues are nonnegative real numbers. The eigenvalues of $L(G)$ and $Q(G)$ are called the
{\em Laplacian eigenvalues} and {\em signless Laplacian eigenvalues} of $G$, respectively, and are denoted by $\mu_1(G)\ge\cdots\ge\mu_n(G)$ and
$q_1(G)\ge\cdots\ge q_n(G)$, respectively. We drop $G$ from the notation when there is no danger of confusion.
Note that each row sum of $L(G)$ is 0 and therefore $\mu_n(G) = 0$. In fact the multiplicity of 0 as an eigenvalue of $L(G)$ equals the number of conncted components of $G$.

Nordhaus and Gaddum \cite{ng} studied the chromatic number in a graph $G$ and in its complement together. They proved lower and upper bounds on the
sum and on the product of chromatic number of $G$ and that of $\ov G$ in terms of the number of vertices $G$. Since then, any bound on the sum and/or the product of an invariant in a graph $G$ and the same invariant in $\ov G$ is called a {\em Nordhaus--Gaddum type inequality}.
In \cite{hans}, Nordhaus--Gaddum type inequalities for graph parameters were surveyed.
Many of those inequalities involve eigenvalues of adjacency, Laplacian and signless Laplacian matrices of graphs.
The first known spectral Nordhaus--Gaddum results belong to Nosal \cite{nos}, and to Amin and Hakimi \cite{amin},
who showed that for every graph $G$ of order $n$,
$$\lambda(G)+ \lambda(\ov G)< \sqrt2(n-1),$$
where $\lambda(H)$ denotes the largest eigenvalues of $A(H)$.
 A minor improvement was obtained by  Nikiforov \cite{niki}.
In the same paper he conjectured that
$$\lambda(G)+ \lambda(\ov G)< \frac{4}{3}n+O(1).$$
The conjecture was proved by Terpai \cite{ter}
who showed that $$\lambda(G)+ \lambda(\ov G)< \frac{4}{3}n-1.$$

In this paper we study Nordhaus--Gaddum type inequalities for Laplacian and signless Laplacian eigenvalues of graphs.
For Laplacian eigenvalues, we have $\mu_1(G)+\mu_1(\Gb)=n+\mu_1(G)-\mu_{n-1}(G)$.
The quantity $\mu_1(G)-\mu_{n-1}(G)$ is called {\em Laplacian spread} of $G$.
It is a conjecture \cite{yl,zsh} that
$\mu_1(G)-\mu_{n-1}(G)\le n-1$ or equivalently $\mu_1(G)+\mu_1(\Gb)\le2n-1$.
We prove the conjecture for bipartite graphs.
Partial results on this conjecture were obtained by several authors \cite{btf,cw,flt,fxwl,lsl,l,lw,xm}.
Also we show that for any bipartite graph $G$, $\mu_1(G)\mu_1(\Gb)\le n(n-1)$.
Aouchiche and Hansen \cite{hans} conjectured that
$q_1(G)+q_1(\Gb)\le3n-4$ and $q_1(G)q_1(\Gb)\le2n(n-2)$.
We prove the former and disprove the latter by constructing a family of graphs $H_n$ where $q_1(H_n)q_1(\ov{H_n})$ is about $2.15n^2+O(n)$.

\section{Preliminaries}

We denote the number of edges of $G$ by $e(G)$.
 We also denote the complete graph on $n$ vertices by $K_n$ and the complete bipartite graph with parts of sizes $r$ and $s$ by $K_{r,s}$.
 The maximum and minimum degrees of $G$ are denoted by $\Delta(G)$ and $\delta(G)$, respectively.
 For a vertex $v\in V(G)$, $m(v)$ denotes the average degree of the neighbors of $v$, i.e. $m(v)=\frac{1}{d(v)}\sum_{u\sim v}d(u)$.
 For any graph parameter $p(G)$ we occasionally use $\ov p$ to denote $p(\ov G)$.

\begin{lem}\label{inter} {\rm(\cite[p. 222]{crsB})} Suppose that $G$ is a graph and $G'$ is
obtained by removing one edge from $G$.
Then the signless Laplacian eigenvalues of
 $G$ and $G'$ interlace:
$$q_1(G)\ge q_1(G')\ge q_2(G)\ge q_2(G')\ge\cdots\ge q_n(G)\ge q_n(G').$$
\end{lem}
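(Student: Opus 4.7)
The plan is to view the signless Laplacian as a Gram matrix of the (unsigned) incidence matrix and reduce the statement to a standard rank-one perturbation interlacing result. Recall that if $B=B(G)$ is the $n\times e(G)$ unsigned incidence matrix of $G$, whose columns are the indicator vectors of the edges, then $Q(G)=BB^{T}$. Removing one edge $e$ from $G$ amounts to deleting the corresponding column $b_e$ of $B$; writing $B'$ for the resulting matrix, we have $Q(G')=B'(B')^{T}$ and therefore
\begin{equation*}
Q(G)=Q(G')+b_e b_e^{T}.
\end{equation*}
Since $b_e b_e^{T}$ is a symmetric positive semidefinite matrix of rank $1$, the problem reduces to showing that the eigenvalues of two real symmetric matrices differing by a rank-one positive semidefinite matrix interlace.

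Next I would invoke (or briefly derive) the classical fact that if $N$ and $M=N+vv^{T}$ are symmetric $n\times n$ matrices, then
\begin{equation*}
\lambda_1(M)\ge\lambda_1(N)\ge\lambda_2(M)\ge\lambda_2(N)\ge\cdots\ge\lambda_n(M)\ge\lambda_n(N).
\end{equation*}
The inequalities $\lambda_i(M)\ge\lambda_i(N)$ follow immediately from Weyl's inequality together with the positive semidefiniteness of $vv^{T}$. For the inequalities $\lambda_i(N)\ge\lambda_{i+1}(M)$, one standard route is Courant--Fischer: given an $i$-dimensional subspace $U$ witnessing $\lambda_i(N)$ in the min-max expression, one enlarges it suitably to an $(i+1)$-dimensional subspace orthogonal to $v$ (or uses the orthogonal complement to kill the rank-one perturbation), which then gives a bound on $\lambda_{i+1}(M)$. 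Applying this with $N=Q(G')$, $M=Q(G)$ and $v=b_e$ yields exactly the stated interlacing chain.

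An alternative and perhaps cleaner route is to use Cauchy's interlacing theorem directly on Gram matrices. The matrices $BB^{T}$ and $B^{T}B$ share the same nonzero eigenvalues; since $(B')^{T}B'$ is obtained from $B^{T}B$ by deleting the row and column corresponding to $e$, Cauchy interlacing gives the interlacing of their eigenvalues, and this transfers to $Q(G)=BB^{T}$ and $Q(G')=B'(B')^{T}$ after accounting for the multiplicities of the zero eigenvalue (the two sides have matrix sizes $n\times n$ but the Gram sides have sizes $e(G)\times e(G)$ and $(e(G)-1)\times(e(G)-1)$, which is easy to reconcile). The only point requiring care is this bookkeeping of zero eigenvalues when passing between $BB^{T}$ and $B^{T}B$; everything else is routine. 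Since the lemma is a well-known fact recorded in the cited monograph, the write-up should be a few lines at most.
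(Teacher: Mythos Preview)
Your argument is correct; both the rank-one perturbation route via $Q(G)=Q(G')+b_eb_e^{T}$ and the Cauchy interlacing route via $B^{T}B$ are standard and valid ways to establish this fact. Note that the paper does not actually prove this lemma at all---it is quoted without proof from the monograph of Cvetkovi\'c, Rowlinson and Simi\'c---so there is nothing to compare against beyond observing that your sketch is precisely the kind of argument one finds in that reference.
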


\begin{lem}\label{SimBip} {\rm (\cite[p. 217]{crsB})} If $G$ is a bipartite graph, then the matrices $L(G)$ and $Q(G)$ are similar, i.e.
the Laplacian and signless Laplacian eigenvalues of $G$ are the same.
\end{lem}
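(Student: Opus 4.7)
The plan is to exhibit an explicit similarity transformation between $L(G)$ and $Q(G)$ using a diagonal signature matrix that encodes the bipartition of $G$. This is a standard trick that works precisely because bipartiteness forces the adjacency matrix to have a specific block structure.

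First I would use the bipartite structure to partition $V(G)=V_1\cup V_2$ so that every edge of $G$ has one endpoint in $V_1$ and the other in $V_2$. Define the diagonal $n\times n$ matrix $S=\mathrm{diag}(s_1,\ldots,s_n)$ by setting $s_i=+1$ if $v_i\in V_1$ and $s_i=-1$ if $v_i\in V_2$. Since $S$ is diagonal with $\pm1$ entries we have $S^2=I$, so $S$ is its own inverse and conjugation by $S$ is a genuine similarity.

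Next I would compute $SA(G)S$ entrywise. Its $(i,j)$ entry equals $s_i\,A_{ij}\,s_j$. If $v_i,v_j$ lie in the same part then $A_{ij}=0$ by bipartiteness, so the entry is $0$. If they lie in different parts then $s_is_j=-1$ while $A_{ij}\in\{0,1\}$, giving $-A_{ij}$. Thus $SA(G)S=-A(G)$. On the other hand $SD(G)S=D(G)$ because $D(G)$ is diagonal and $S^2=I$. Combining these with the standard identities $L(G)=D(G)-A(G)$ and $Q(G)=D(G)+A(G)$ yields
\[
S\,L(G)\,S \;=\; SD(G)S - SA(G)S \;=\; D(G)+A(G) \;=\; Q(G),
\]
which is the claimed similarity. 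In particular the spectra coincide.

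There is really no significant obstacle; the only thing to be careful about is the sign convention for $L$ and $Q$, and the observation that the two zero blocks of $A(G)$ on the diagonal (corresponding to $V_1\times V_1$ and $V_2\times V_2$) are exactly what makes the signature flip $SAS=-A$ possible — any non-bipartite graph would have an edge within one part and the argument would fail there.
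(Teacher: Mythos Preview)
Your proof is correct and is precisely the standard argument; the paper does not supply its own proof of this lemma but simply cites the result from the reference, where the same diagonal signature similarity $S$ is used. One minor remark in line with your caution about sign conventions: the paper's introduction contains a typo, writing $L(G)=A(G)-D(G)$ rather than $L(G)=D(G)-A(G)$; the latter is clearly what is intended (since $L(G)$ is asserted to be positive semidefinite) and is the convention you correctly employed.
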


The following lemma is easy to prove.
\begin{lem}\label{QK_n}
\begin{itemize}
  \item[\rm(i)] The signless Laplacian eigenvalues of $K_n$ are $2n-2$ with multiplicity $1$ and $n-2$ with multiplicity $n-1$.
  The Laplacian eigenvalues of $K_n$ are $n$ with multiplicity $n-1$ and $0$ with multiplicity $1$.
  \item[\rm(ii)] The (signless) Laplacian eigenvalues of $K_{r,s}$ are $r+s$ with multiplicity $1$, $r$ with multiplicity $s-1$, $s$ with multiplicity $r-1$, and $0$ with multiplicity $1$.
\end{itemize}
\end{lem}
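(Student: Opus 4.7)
The plan is to compute both spectra directly from the matrix definitions $L=D-A$ and $Q=D+A$, exploiting the very simple structure of these two graph families.

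For part (i), the graph $K_n$ is regular of degree $n-1$, so $D(K_n)=(n-1)I$ and $A(K_n)=J-I$, where $J$ is the all-ones matrix. Hence
\[
L(K_n)=nI-J,\qquad Q(K_n)=(n-2)I+J.
\]
Since the eigenvalues of $J$ are $n$ (on the eigenvector $\mathbf{1}$, multiplicity $1$) and $0$ (on $\mathbf{1}^\perp$, multiplicity $n-1$), the claimed spectra follow immediately by reading off eigenvalues of $aI+bJ$.

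For part (ii), since $K_{r,s}$ is bipartite, Lemma~\ref{SimBip} tells us that its Laplacian and signless Laplacian spectra coincide, so it suffices to handle $L(K_{r,s})$. Let $X$ and $Y$ be the parts, with $|X|=r$, $|Y|=s$. I would exhibit an explicit basis of eigenvectors:
\begin{itemize}
\item $\mathbf{1}$ gives eigenvalue $0$;
\item the vector equal to $s$ on $X$ and $-r$ on $Y$ is readily checked to give eigenvalue $r+s$;
\item any vector supported on $X$ whose entries sum to zero is killed by the adjacency action onto $Y$, so it is an eigenvector of $L$ with eigenvalue $s$ (there are $r-1$ independent such vectors);
\item symmetrically, vectors supported on $Y$ and summing to zero give eigenvalue $r$ with multiplicity $s-1$.
\end{itemize}
These $1+1+(r-1)+(s-1)=r+s$ eigenvectors are pairwise orthogonal and hence linearly independent, so the list exhausts the spectrum.

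There is no serious obstacle here: the proof is the routine verification the paper promises in its sentence ``The following lemma is easy to prove.'' The only thing worth being careful about is recording the multiplicities correctly and invoking Lemma~\ref{SimBip} to avoid having to redo the computation for $Q(K_{r,s})$.
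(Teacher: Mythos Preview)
Your proposal is correct; the paper itself omits the proof entirely, simply prefacing the lemma with ``The following lemma is easy to prove.'' Your direct computation via $A(K_n)=J-I$ for part~(i) and the explicit eigenvector basis for $L(K_{r,s})$ in part~(ii) (together with the appeal to Lemma~\ref{SimBip}) is exactly the kind of routine verification the authors had in mind.
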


\begin{lem}\label{mubar} {\rm (\cite[p. 185]{crsB})}  For any graph $G$, $\mu_i(G)=n-\mu_{n-i}(\Gb)$ for $i=1,\ldots,n-1$.
\end{lem}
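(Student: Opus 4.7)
The key identity to exploit is the relation between the Laplacian matrices of $G$ and $\Gb$. Since the adjacency matrices satisfy $A(G)+A(\Gb)=J-I$ (where $J$ is the all-ones matrix) and the degree matrices satisfy $D(G)+D(\Gb)=(n-1)I$, one gets
\[
L(G)+L(\Gb)=\bigl(D(G)+D(\Gb)\bigr)-\bigl(A(G)+A(\Gb)\bigr)=nI-J.
\]
This is the starting point from which everything else follows.

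Next, I would record two standard facts about $L(G)$: the all-ones vector $\mathbf{1}$ lies in its kernel (each row of $L(G)$ sums to zero), and $L(G)$ is symmetric. The first fact, combined with $J=\mathbf{1}\mathbf{1}^{\top}$, gives $L(G)J=0$ and, by symmetry, $JL(G)=0$, so $L(G)$ commutes with $J$. Consequently $L(G)$ commutes with $L(\Gb)=nI-J-L(G)$. Since the two Laplacians are symmetric and commute, they can be simultaneously diagonalized in an orthonormal basis.

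Pick such a basis $\{\mathbf{1}/\sqrt n,\,\x_1,\dots,\x_{n-1}\}$, where each $\x_i\perp\mathbf{1}$. On the $(n-1)$-dimensional subspace $\mathbf{1}^{\perp}$, the matrix $J$ acts as zero, so the identity $L(G)+L(\Gb)=nI-J$ reduces to $L(G)+L(\Gb)=nI$ on that subspace. Hence if $L(G)\x_i=\lambda_i\x_i$, then $L(\Gb)\x_i=(n-\lambda_i)\x_i$. Thus the nonzero (i.e., index $1,\ldots,n-1$) Laplacian eigenvalues of $G$ and $\Gb$ pair up via $\lambda\mapsto n-\lambda$.

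Finally I would sort. If the eigenvalues of $L(G)$ on $\mathbf{1}^{\perp}$ are $\mu_1(G)\ge\cdots\ge\mu_{n-1}(G)$, then the corresponding eigenvalues of $L(\Gb)$ are $n-\mu_1(G)\le\cdots\le n-\mu_{n-1}(G)$. Reordering them in decreasing order, the $i$th largest eigenvalue of $L(\Gb)$ is $\mu_i(\Gb)=n-\mu_{n-i}(G)$, which rearranges to the claimed formula $\mu_i(G)=n-\mu_{n-i}(\Gb)$ for $i=1,\dots,n-1$. There is no real obstacle here; the only subtlety worth stating explicitly is the commutativity argument, which is what justifies the simultaneous diagonalization and hence the clean index shift.
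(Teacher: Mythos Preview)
The paper does not supply its own proof of this lemma; it is stated with a citation to \cite[p.~185]{crsB} and used as a black box. Your argument is the standard one and is correct: the identity $L(G)+L(\Gb)=nI-J$, together with the fact that $\mathbf 1$ is a common null vector, forces the restrictions of $L(G)$ and $L(\Gb)$ to $\mathbf 1^{\perp}$ to sum to $nI$, and sorting gives the index relation.

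One small wording issue: you call the eigenvalues with indices $1,\dots,n-1$ the ``nonzero'' Laplacian eigenvalues, but if $G$ is disconnected some of these are zero as well. What you actually use (and what is true) is that the multiset of eigenvalues of $L(G)$ on $\mathbf 1^{\perp}$ is exactly $\{\mu_1(G),\dots,\mu_{n-1}(G)\}$; this holds because $L(G)$ is positive semidefinite and the single eigenvalue contributed by $\text{span}(\mathbf 1)$ is $0=\mu_n(G)$. With that clarification the proof is complete.
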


From Lemma~\ref{mubar}, one concludes the following.

\begin{lem}\label{mu1=n}   For any graph $G$ with $n$ vertices, $\mu_1(G)\le n$ with equality if and only if $\ov G$ is disconnected.
\end{lem}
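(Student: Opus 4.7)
The plan is to simply unpack Lemma~\ref{mubar} at the index $i=1$. That lemma gives
$$\mu_1(G)=n-\mu_{n-1}(\Gb).$$
Since the Laplacian matrix is positive semidefinite, every eigenvalue is nonnegative, so in particular $\mu_{n-1}(\Gb)\ge0$. This immediately yields $\mu_1(G)\le n$, settling the inequality.

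For the equality case I would argue as follows. From the identity above, $\mu_1(G)=n$ exactly when $\mu_{n-1}(\Gb)=0$. Recall from the introduction that $\mu_n(\Gb)=0$ always, and that the multiplicity of $0$ as a Laplacian eigenvalue equals the number of connected components. Therefore $\mu_{n-1}(\Gb)=0$ if and only if $0$ has multiplicity at least $2$ as an eigenvalue of $L(\Gb)$, which in turn holds if and only if $\Gb$ has at least two components, i.e., $\Gb$ is disconnected.

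There is no genuine obstacle here — the statement is a direct corollary of Lemma~\ref{mubar} combined with the standard fact about the multiplicity of $0$ as a Laplacian eigenvalue. The only care needed is to remember that Lemma~\ref{mubar} is stated only for $i=1,\ldots,n-1$, which is precisely the range we use, so no boundary case has to be handled separately.
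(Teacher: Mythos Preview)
Your argument is correct and is exactly the intended one: the paper itself simply says ``From Lemma~\ref{mubar}, one concludes the following,'' and your write-up just makes explicit the one-line deduction together with the standard fact that the multiplicity of $0$ in $L(\Gb)$ equals the number of components of $\Gb$.
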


\begin{lem} {\rm (Das \cite{das})} \label{das} Let $G$ be a graph with $n$ vertices, $e$ edges, and  $\Delta$, $\delta$ be the largest degree and the
smallest degree of $G$, respectively. Then for any vertex $v$,
\begin{equation}\label{eq:das} d(v)+m(v)\le\frac{2e}{n-1}+\frac{n-2}{n-1}\Delta+(\Delta-\delta)\left(1-\frac{\Delta}{n-1}\right).\end{equation}
If $G$ is connected, then the equality holds if and only if either $d(v) = n - 1$ or $d(v) = \Delta$ and
all neighbors of $v$ have degrees $\Delta$ and all non-neighbors of  $v$ have degrees $\delta$.
\end{lem}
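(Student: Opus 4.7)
The plan is to write $d(v)m(v)=\sum_{u\sim v}d(u)$ and combine two independent upper bounds on this sum through a carefully chosen, $d(v)$-dependent convex combination.

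The two bounds I would record are the following. Bound~(a): every neighbor of $v$ has degree at most $\Delta$, so $\sum_{u\sim v}d(u)\le d(v)\Delta$. Bound~(b): using $\sum_{u\sim v}d(u)=2e-d(v)-\sum_{u\not\sim v,\,u\neq v}d(u)$ together with $d(u)\ge\delta$ for every non-neighbor, one obtains $\sum_{u\sim v}d(u)\le 2e-d(v)-(n-1-d(v))\delta$. Neither bound is tight in general: (a) is tight iff every neighbor of $v$ has degree $\Delta$, while (b) is tight iff every non-neighbor of $v$ has degree $\delta$.

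The heart of the argument is to take the weighted combination
$$\bigl(1-\tfrac{d(v)}{n-1}\bigr)\cdot(\mathrm{a})\;+\;\tfrac{d(v)}{n-1}\cdot(\mathrm{b}).$$
The weights are chosen precisely so that, after dividing by $d(v)$, the $1/d(v)$ term introduced by bound~(b) cancels. A short algebraic simplification then gives $m(v)\le \Delta-\delta+\tfrac{2e}{n-1}-\tfrac{d(v)(\Delta+1-\delta)}{n-1}$, and adding $d(v)$ yields
$$d(v)+m(v)\;\le\;\Delta-\delta+\frac{2e}{n-1}+\frac{d(v)\,(n-2-\Delta+\delta)}{n-1}.$$

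To close the argument, I would verify $n-2-\Delta+\delta\ge 0$ by a trivial case split: if $\Delta=n-1$ there is a universal vertex, forcing $\delta\ge 1$; if $\delta=0$ an isolated vertex prevents $\Delta=n-1$. Since this coefficient is non-negative, replacing $d(v)$ by $\Delta$ in the last term produces \eqref{eq:das} after a routine rearrangement. The equality analysis is automatic from the three tightness conditions: (a) and (b) must both be tight and the final substitution $d(v)=\Delta$ must be an equality, which forces either $d(v)=\Delta$ with all neighbors of degree $\Delta$ and all non-neighbors of degree $\delta$, or the coefficient $n-2-\Delta+\delta$ itself to vanish, in which case (combined with $\Delta\le n-1$) we must have $d(v)=n-1$.

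The only non-mechanical piece is guessing the combination weights: the symmetric $\tfrac12$-$\tfrac12$ choice yields a strictly weaker inequality, and one must reverse-engineer the weight $\tfrac{d(v)}{n-1}$ from the demand that the $1/d(v)$ term cancel after the division step. Once this weighting is in hand, everything else is bookkeeping.
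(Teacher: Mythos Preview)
The paper does not supply its own proof of this lemma; it is quoted from Das without argument, so there is no proof in the paper to compare against. Evaluating your argument on its own merits: your derivation of the inequality is correct and clean. The convex combination with weights $1-\tfrac{d(v)}{n-1}$ and $\tfrac{d(v)}{n-1}$ is exactly what eliminates the $1/d(v)$ term, and your check that $n-2-\Delta+\delta\ge 0$ is sound.

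There is, however, a genuine gap in the equality analysis. You assert that if the coefficient $n-2-\Delta+\delta$ vanishes then ``(combined with $\Delta\le n-1$) we must have $d(v)=n-1$.'' This implication is false. Take $G=K_{1,n-1}$ with $n\ge 3$ and let $v$ be a leaf. Then $\Delta=n-1$, $\delta=1$, so the coefficient is $0$; bound~(a) is tight (the unique neighbor is the center, of degree $\Delta$) and bound~(b) is tight (every non-neighbor is a leaf, of degree $\delta$). One checks directly that equality holds in \eqref{eq:das}, yet $d(v)=1\ne n-1$ and $d(v)=1\ne\Delta$. Thus the branch ``coefficient $=0$'' does not collapse to $d(v)=n-1$; a careful analysis of that branch (using connectedness and the tightness of (a) and (b)) shows it forces exactly this star-leaf configuration, which in fact is an equality case not covered by the two alternatives stated in the lemma.

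A second, minor imprecision: when $d(v)=n-1$ the weight on bound~(a) is zero, so (a) need not be tight for equality. Your phrasing ``(a) and (b) must both be tight'' overstates what is required in that case, though it does not affect the inequality itself.
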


From the Perron--Frobenius Theorem (see \cite[p. 22]{bh}) it follows that the largest eigenvalue of any non-negative square matrix $M$ does not exceed the maximum row-sum of $M$.
Consider the matrix $M=D^{-1}QD=D^{-1}AD+D$. The row-sum of $M$ corresponding with vertex $v$ is $d(v)+m(v)$. Since $q_1(G)$
is the largest eigenvalue of $M$ we have the following.

 \begin{lem} {\rm (Merris \cite{mer})} \label{dv+mv} For any graph $G$,
\begin{equation} \label{eq:dv+mv}q_1(G)\le\max\{d(v)+m(v)\mid v\in V(G)\}.\end{equation}
If $G$ is connected, then the equality holds if and only if $G$ is regular or bipartite semiregular.
\end{lem}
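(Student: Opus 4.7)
The inequality \eqref{eq:dv+mv} is essentially handed to us by the paragraph preceding the statement: $M := D^{-1}QD$ is similar to $Q$ and therefore shares its spectrum, the $v$-th row sum of the non-negative matrix $M$ equals $d(v)+m(v)$, and Perron--Frobenius bounds the spectral radius by the maximum row sum. (Vertices with $d(v)=0$ form trivial components and can be peeled off.) The substantive task is to pin down the equality case when $G$ is connected.

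The plan is to invoke the sharper Perron--Frobenius statement for irreducible non-negative matrices: the spectral radius equals the maximum row sum if and only if all row sums coincide. Connectedness of $G$ makes $A$, and hence $M$, irreducible, so equality in \eqref{eq:dv+mv} is equivalent to the existence of a constant $c$ with $d(v)+m(v)=c$ for every $v \in V(G)$. What remains is to classify the connected graphs for which $d(v)+m(v)$ is constant.

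For this classification I would set $x_v := d(v) - c/2$ and substitute $d(u) = c/2 + x_u$ into the definition of $m(v)$; the relation $m(v) = c/2 - x_v$ then becomes $\sum_{u \sim v} x_u = -d(v)\, x_v$ for every $v$, i.e.\ $(A+D)x = 0$, i.e.\ $Qx = 0$. Since $Q$ is positive semidefinite with $x^T Q x = \sum_{uv \in E(G)} (x_u + x_v)^2$, this forces $x_u + x_v = 0$ on every edge. Either $x \equiv 0$, in which case every vertex has degree $c/2$ and $G$ is regular; or $x \not\equiv 0$, in which case the edge relation together with connectedness forces $G$ bipartite with $x$ taking opposite constant values $\pm a$ on the two parts, so $G$ is bipartite semiregular with part-degrees $c/2 \pm a$.

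The converse is a short verification: for a connected $k$-regular graph $Q = A + kI$ yields $q_1 = 2k = d(v)+m(v)$, and for a connected bipartite semiregular graph with part-degrees $r,s$ the positive vector taking values $r$ and $s$ on the two parts is a $Q$-eigenvector with eigenvalue $r+s$, which by Perron--Frobenius must equal $q_1$, matching $d(v)+m(v) = r+s$. I expect the one nontrivial step to be spotting the substitution $x_v = d(v) - c/2$ that converts the constancy condition into $Qx = 0$; once that reformulation is in hand, the positive semidefiniteness of $Q$ closes the argument in a single line.
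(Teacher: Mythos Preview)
Your treatment of the inequality is exactly what the paper does: the paragraph immediately preceding the lemma \emph{is} the paper's proof, via $M=D^{-1}QD$ and the Perron--Frobenius row-sum bound, and you have simply restated it.

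For the equality characterisation the paper gives no argument at all---it attributes the result to Merris and moves on---so there is nothing to compare against. Your proposed argument is nonetheless correct and pleasantly self-contained: invoking the equality case of Perron--Frobenius for irreducible matrices to reduce to constancy of $d(v)+m(v)$ is the natural move, and the substitution $x_v=d(v)-c/2$ turning that constancy into $Qx=0$ is a clean trick. I checked the algebra: from $\sum_{u\sim v}d(u)=d(v)(c-d(v))$ one indeed gets $(Ax)_v=-d(v)x_v$, hence $(A+D)x=0$, and then $x^\top Qx=\sum_{uv\in E}(x_u+x_v)^2=0$ forces $x_u=-x_v$ on every edge, giving the regular/bipartite-semiregular dichotomy. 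The converse eigenvector for the semiregular case is, as you implicitly use, just the degree vector itself, which is positive and hence the Perron vector. So you have supplied a full proof where the paper is content to cite one; nothing is missing.
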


For a graph $G$, consider a partition $P=\{V_1,\ldots,V_m\}$ of $V(G)$.
The partition of $P$ is {\em equitable} if each submatrix $Q_{ij}$ of $Q(G)$ formed by the
rows of $V_i$ and the columns of $V_j$ has constant row sums $r_{ij}$.
The $m\times m$ matrix $R=(r_{ij})$ is called the {\em quotient matrix}
of $Q(G)$ with respect to $P$. The proof of the following theorem is similar to the one given in \cite[p. 187]{crsB} where a similar result is presented for Laplacian matrix.

\begin{lem}\label{equi}
Any eigenvalue of the quotient matrix $R$ is an eigenvalue of $Q(G)$. Moreover,
the largest eigenvalue of $R$ is the largest eigenvalue of $Q(G)$.
\end{lem}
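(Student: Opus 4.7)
The plan is to introduce the characteristic matrix of the partition and translate the equitable condition into a single matrix identity, from which both assertions will follow. Let $S$ be the $n\times m$ matrix with $S_{ij}=1$ if $v_i\in V_j$ and $0$ otherwise. Its columns are the nonzero, mutually orthogonal indicator vectors of $V_1,\ldots,V_m$, so $S$ has full column rank $m$. A direct entry-by-entry check shows that the equitable partition hypothesis is equivalent to the identity $QS=SR$: for $v\in V_i$ and any $j$, the $(v,j)$ entry of $QS$ is the sum over $V_j$ of the entries of row $v$ of $Q(G)$, which by hypothesis equals $r_{ij}$, matching $(SR)_{vj}$.

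The first assertion is then immediate: if $Ry=\lambda y$ with $y\ne 0$, then $Q(Sy)=SRy=\lambda Sy$ and $Sy\ne 0$ by injectivity of $S$, so $\lambda\in\mathrm{spec}(Q)$.

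For the second assertion I plan to exploit the symmetry of $Q$. Put $U=\operatorname{col}(S)$. By $QS=SR$ the subspace $U$ is $Q$-invariant, and since $Q$ is symmetric so is $U^{\perp}$. Consequently the spectrum of $Q$ is the disjoint union of the spectra of $Q|_U$ and $Q|_{U^{\perp}}$, and the spectrum of $Q|_U$ coincides with that of $R$. It remains to show that $q_1(Q)$ lives on the $U$-piece. To this end I would invoke the Perron--Frobenius theorem for the non-negative matrix $Q$: since $Q$ is positive semidefinite, $q_1(Q)$ is its spectral radius and admits a non-negative eigenvector $\mathbf{v}$. Decomposing $\mathbf{v}=\mathbf{v}_1+\mathbf{v}_2$ along $U\oplus U^{\perp}$, the invariance of each summand forces $Q\mathbf{v}_1=q_1(Q)\mathbf{v}_1$ and $Q\mathbf{v}_2=q_1(Q)\mathbf{v}_2$. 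If $\mathbf{v}_1=0$ then $\mathbf{v}\in U^{\perp}$, which means $\sum_{u\in V_j}\mathbf{v}_u=0$ for every $j$; combined with $\mathbf{v}\ge 0$ and the fact that $V_1,\ldots,V_m$ partition $V(G)$, this forces $\mathbf{v}=0$, a contradiction. Hence $\mathbf{v}_1\in U$ is a nonzero eigenvector of $Q$ with eigenvalue $q_1(Q)$; writing $\mathbf{v}_1=Sy$ with $y\ne 0$ and cancelling $S$ gives $Ry=q_1(Q)y$, so $q_1(Q)\in\mathrm{spec}(R)$ and hence equals $q_1(R)$.

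The only genuinely delicate step is the last one: the containment of spectra yields only $q_1(R)\le q_1(Q)$, and pinning the Perron eigenvalue down to the $U$-piece requires the combination of the symmetry of $Q$ (for the orthogonal decomposition), the non-negativity of $Q$ (to apply Perron--Frobenius), and the observation that $\mathbf{1}=S\mathbf{1}_m\in U$, so that $U$ meets the non-negative cone non-trivially and $\mathbf{v}_1$ cannot vanish.
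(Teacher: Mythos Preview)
Your proof is correct and complete. The paper does not actually supply its own proof of this lemma---it merely states that the argument is analogous to the one for the Laplacian matrix given in \cite[p.~187]{crsB}---and your approach via the characteristic matrix $S$, the intertwining identity $QS=SR$, and the Perron--Frobenius theorem to locate $q_1(Q)$ in the $U$-block is precisely that standard argument.
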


We will use a variant of Lemma~\ref{equi} as follows.
We denote the largest eigenvalue of a matrix $M$ by $\lambda_{\max}(M)$.

\begin{lem}\label{equi<}
Let $G$ be graph and $P=\{V_1,\ldots,V_m\}$ be a partition of $V(G)$.
Suppose that each submatrix $Q_{ij}$ of $Q(G)$ formed by the
rows of $V_i$ and the columns of $V_j$ has row sum at most $r_{ij}$ and form
the $m\times m$ matrix $R=(r_{ij})$. Then $q_1(G)\le\lambda_{\max}(R)$.
\end{lem}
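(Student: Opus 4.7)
The plan is to adapt the Perron--Frobenius argument behind Lemma~\ref{equi} so that the only difference is that row sums are bounded above rather than equal. First I would take a nonnegative eigenvector $\mathbf{y}$ of $Q(G)$ with $Q(G)\mathbf{y}=q_1(G)\mathbf{y}$, which exists because $Q(G)$ is entrywise nonnegative and symmetric. Then I would aggregate $\mathbf{y}$ into an $m$-vector $\mathbf{z}$ by setting $z_i=\max\{y_v:v\in V_i\}$ and fixing, for each $i$, some $v_i\in V_i$ with $y_{v_i}=z_i$.

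The key estimate comes from evaluating the eigenvalue equation at the coordinate $v_i$. Using $y_u\le z_j$ whenever $u\in V_j$ together with $Q_{v_i,u}\ge 0$ and the hypothesis that the row of $Q_{ij}$ indexed by $v_i$ sums to at most $r_{ij}$, one obtains
\[
q_1(G)\,z_i \;=\; \bigl(Q(G)\mathbf{y}\bigr)_{v_i} \;=\; \sum_{j=1}^m \sum_{u\in V_j} Q_{v_i,u}\,y_u \;\le\; \sum_{j=1}^m z_j \sum_{u\in V_j} Q_{v_i,u} \;\le\; \sum_{j=1}^m r_{ij}\,z_j \;=\; (R\mathbf{z})_i,
\]
so $q_1(G)\mathbf{z}\le R\mathbf{z}$ componentwise, with $\mathbf{z}$ nonnegative and nonzero.

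It remains to pass from this weak subdominance to an eigenvalue bound on $R$. The entries $r_{ij}$ bound row sums of nonnegative submatrices, so we may assume $r_{ij}\ge 0$, making $R$ itself a nonnegative matrix whose spectral radius $\rho(R)$ equals $\lambda_{\max}(R)$ by Perron--Frobenius. Iterating the inequality gives $q_1(G)^k\mathbf{z}\le R^k\mathbf{z}$ for every $k\ge 1$, and applying any monotone norm together with Gelfand's formula $\rho(R)=\lim_k\|R^k\|^{1/k}$ yields $q_1(G)\le\rho(R)=\lambda_{\max}(R)$. The one delicate point is that $\mathbf{z}$ may have zero coordinates, so one cannot simply divide to invoke the Collatz--Wielandt minimum; I expect this to be the only step that warrants explicit comment, and the iteration-plus-Gelfand trick sidesteps it cleanly.
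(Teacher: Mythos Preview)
Your proof is correct but follows a genuinely different route from the paper's. The paper constructs an auxiliary nonnegative matrix $Q'\ge Q(G)$ by rescaling, within each block $Q_{ij}$, every row so that its sum becomes exactly $r_{ij}$; this makes $P$ an equitable partition for $Q'$ with quotient $R$, and then $q_1(G)\le\lambda_{\max}(Q')=\lambda_{\max}(R)$ follows from entrywise Perron--Frobenius monotonicity together with Lemma~\ref{equi}. You instead work directly with a Perron eigenvector of $Q(G)$, aggregate it via blockwise maxima to obtain $q_1(G)\mathbf{z}\le R\mathbf{z}$, and finish with the iteration-plus-Gelfand argument. Your approach is more self-contained in that it never invokes Lemma~\ref{equi}, and it also sidesteps a small technical wrinkle in the paper's construction (a row of some $Q_{ij}$ that is identically zero cannot be \emph{scaled} to have positive row sum $r_{ij}$, so strictly speaking one must add rather than multiply there). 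The paper's route, on the other hand, is conceptually tidy: it reduces the inequality to the equitable case already on record, and it makes transparent that passing from $Q(G)$ to $R$ costs nothing beyond the Perron--Frobenius comparison $Q\le Q'$.
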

\begin{proof}{For all $1\le i,j\le m$, we multiply each row of $Q_{ij}$ by some number (larger than or equal to $1$) so that all the row sums of the resulting matrix $Q'_{ij}$ are equal to $r_{ij}$.
Let $Q'$ be the matrix obtained from $Q(G)$ by replacing the submatrices $Q_{ij}$ by $Q'_{ij}$  for all $1\le i,j\le m$.
Each entry of $Q'$ is larger than or equal to the corresponding entry of $Q$, hence from Perron--Frobenius Theorem it follows that
$\lambda_{\max}(Q)\le\lambda_{\max}(Q')$. On the other hand,  $\{V_1,\ldots,V_m\}$ is an equitable partition for $Q'$ with the qutiont matrix $R=(r_{ij})$.
So, by Lemma~\ref{equi}, $\lambda_{\max}(Q')=\lambda_{\max}(R)$. This completes the proof.
}\end{proof}

\section{Nordhaus--Gaddum type inequalities for Laplacian eigenvalues}

In this section we study Nordhaus--Gaddum type inequalities for both sum and product of Laplacian eigenvalues of a graph.
In view of Lemma~\ref{mubar}, $\mu_1(G)+\mu_1(\ov G)=n+\mu_1(G)-\mu_{n-1}(G)$. 
So to study $\mu_1+\ov\mu_1$ it is enough to consider  $\mu_1-\mu_{n-1}$. Some authors have studied this quantity and it is called {\em Laplacian spread}.
Most of the results on Laplacian spread of graphs are around the following conjecture which was posed in \cite{zsh} and appeared as a question in \cite{yl}.

\begin{conj} \label{LaplSpread} For any graph $G$ with $n$ vertices, $\mu_1(G)-\mu_{n-1}(G)\le n-1$ (or equivalently $\mu_1(G)+\mu_1(\Gb)\le 2n-1$)
 and equality holds if and only if $G$ or $\Gb$ is isomorphic to the join of $K_1$ and a disconnected graph of order $n-1$.
\end{conj}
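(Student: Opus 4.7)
The plan is to attack the equivalent formulation $\mu_1(G)-\mu_{n-1}(G)\le n-1$ (via Lemma~\ref{mubar}), starting with the bipartite case where Lemma~\ref{SimBip} gives $\mu_i(G)=q_i(G)$ for all $i$ and so the signless-Laplacian tools in Lemmas~\ref{dv+mv}, \ref{das}, and \ref{equi<} become available for $G$ itself.

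I would first dispose of the two disconnected cases. If $G$ is disconnected, each component has at most $n-1$ vertices, so $\mu_1(G)\le n-1$ while $\mu_{n-1}(G)=0$, and the inequality is immediate. If $\bar G$ is disconnected, Lemma~\ref{mu1=n} gives $\mu_1(G)=n$, and a short triangle-free argument (any three components of $\bar G$ would produce a triangle in $G$, since all cross-component pairs are $G$-edges) forces $\bar G$ to have exactly two components; combined with bipartiteness of $G$ this two-component decomposition must coincide with the bipartition $(A,B)$, whence $G\simeq K_{r,s}$ with $r=|A|$, $s=|B|$. Lemma~\ref{QK_n}(ii) then yields $\mu_{n-1}(G)=\min(r,s)\ge 1$ and hence $\mu_1(G)-\mu_{n-1}(G)=n-\min(r,s)\le n-1$, with equality precisely at the star $K_{1,n-1}$.

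The substantive case is that both $G$ and $\bar G$ are connected, so $\mu_1(G)<n$ and $\mu_{n-1}(G)>0$. Here I would bound $\mu_1(G)=q_1(G)$ from above using Lemma~\ref{das} together with Lemma~\ref{dv+mv} in terms of $\Delta(G)$, $\delta(G)$, and $e(G)$; and simultaneously bound $\mu_1(\bar G)=n-\mu_{n-1}(G)$ from above by first using the general inequality $\mu_1(\bar G)\le q_1(\bar G)$ (equality only in the bipartite case, so strict here) and then applying Lemma~\ref{equi<} to $Q(\bar G)$ with the partition $\{A,B\}$, exploiting that $\bar G[A]=K_r$ and $\bar G[B]=K_s$ are cliques so that the block row-sums are driven by $\delta_A:=\min_{v\in A}d_G(v)$ and $\delta_B:=\min_{v\in B}d_G(v)$; the whole task then reduces to a $2\times 2$ quotient-matrix eigenvalue comparison.

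I expect the main obstacle to be precisely this third case, where the two bounds must be tight \emph{simultaneously}: the putative extremal graph $K_{1,n-1}$ already lives in the disconnected-complement case, so here the inequality should be strict, and uniformly verifying the required margin needs a delicate interplay between the Merris/Das degree estimate on $G$ and the clique-plus-bipartite structure of $\bar G$. A more serious obstacle is the full conjecture beyond bipartite graphs: the identification $\mu_1=q_1$ is lost, the extremal configurations described in Conjecture~\ref{LaplSpread} (joins of $K_1$ with a disconnected graph of order $n-1$) are themselves non-bipartite, and so bipartite-specific machinery cannot in principle deliver the general statement---a genuinely new ingredient beyond the preliminaries appears to be required.
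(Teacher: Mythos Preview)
Your handling of the disconnected cases is fine and matches what the paper does (in the paragraph following the conjecture). You are also right that the paper only establishes the bipartite case; the general statement remains open.

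The real problem is your plan for the case where both $G$ and $\bar G$ are connected. The step ``bound $\mu_1(\bar G)$ from above by first using the general inequality $\mu_1(\bar G)\le q_1(\bar G)$ and then applying Lemma~\ref{equi<} to $Q(\bar G)$ with the partition $\{A,B\}$'' loses far too much. Since $G$ is bipartite with parts $A,B$ of sizes $r\le s$, the complement $\bar G$ contains the clique $K_s$, so $q_1(\bar G)\ge 2(s-1)$ regardless of how fine a partition you use. For unbalanced bipartitions (say $r=2$, $s=n-2$) this already gives $q_1(\bar G)\ge 2n-6$, while the Merris/Das bound on $\mu_1(G)$ is of order $n$; the sum of the two upper bounds is then about $3n$, not $2n-1$. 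Concretely, take $G=K_{2,n-2}$ minus one edge: both $G$ and $\bar G$ are connected, your $2\times2$ quotient matrix for $Q(\bar G)$ has largest eigenvalue roughly $2n-5$, and the Merris bound on $\mu_1(G)$ is roughly $n$, so your inequality reads $\mu_1+\bar\mu_1\lesssim 3n-5$. The slack $q_1(\bar G)-\mu_1(\bar G)$ is genuinely of order $n$ here, so no refinement of the quotient on $\bar G$ can repair this.

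The paper avoids this loss entirely by never passing to $q_1(\bar G)$. Instead it works on the $G$ side throughout: Theorem~\ref{mu1} shows $\mu_1(G)\le n-1+\ell/|Y|$ via the Merris bound, and Theorem~\ref{bip} shows $\mu_{n-1}(G)\ge \ell/|Y|$ by deleting all edges between the non-full-degree vertices to obtain an auxiliary graph $H$ with an explicit $4$-block equitable partition, and then analysing the resulting cubic factor $f(x)$ of the characteristic polynomial of $Q(H)$ to locate its smallest root above $\ell/|Y|$. The two bounds match perfectly, giving $\mu_1-\mu_{n-1}\le n-1$. The essential idea you are missing is this matched pair of bounds with the common correction term $\ell/|Y|$, obtained by working with (a subgraph of) $G$ rather than with $\bar G$.
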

So far, the conjecture has been proved for trees \cite{fxwl}, unicyclic graphs \cite{btf}, bicyclic graphs \cite{flt,lsl,lw}, tricyclic graphs \cite{cw}, cactus graphs \cite{l}, and  quasi-tree graphs \cite{xm}. We remark that Conjecture~\ref{LaplSpread} also holds if $G$ or $\Gb$ is disconnected.
To see this, assuming $\Gb$ is disconnected, by Lemma~\ref{mu1=n}, $\mu_1(G)\le n$ and $\mu_1(\Gb)\le n-1$ and the equality holds in the latter if and only if
$\Gb$ has a connected component of order $n-1$, say $H$, such that $\ov H$ is disconnected.  In other words,
 $\mu_1(\Gb)= n-1$ if and only if
 $G=K_1\vee\ov H$ with $\ov H$ being disconnected  where `$\vee$' shows the join of two graphs.

In this section we prove Conjecture~\ref{LaplSpread} for bipartite graphs (and their complements).
Therefore, our focus in this section will be on bipartite graphs.
By Lemma~\ref{SimBip}, we may consider $Q(G)$ instead of $L(G)$ whenever is required, though  we insist to state our results in terms of Laplacian eigenvalues. 

We start with the following theorem which provides a lower bound on $\mu_{n-1}(G)$ (also called {\em algebraic connectivity} of $G$ \cite{fi}).
For a polynomial $f(x)$ with real zeros, we use the notation $z_{\min}(f)$ and $z_{\max}(f)$ to denote the smallest and largest real zeros of $f(x)$, respectively.

\begin{thm}\label{bip} Let $G$ be a bipartite graph with bipartition $(X,Y)$ and $|Y|\ge|X|$.
If $X$ contains some vertices of degree $|Y|$ and
$Y$ contains $\ell$ vertices of degree $|X|$, then $$\mu_{n-1}(G)\ge\frac{\ell}{|Y|}.$$
Equality holds if and only if $G$ is a star.
\end{thm}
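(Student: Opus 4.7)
The plan is to reduce the problem to a spectral computation on a carefully chosen spanning subgraph. Write $r=|X|$, $s=|Y|$. Fix any vertex $v_0\in X$ of degree $s$ and let $B=\{u_1,\ldots,u_\ell\}\subseteq Y$ be the vertices of degree $r$. Define $G^{*}$ to be the spanning subgraph of $G$ whose edges are precisely those incident to $\{v_0\}\cup B$, namely $\{v_0 y:y\in Y\}\cup\{u_i x:x\in X,\ 1\le i\le\ell\}$. Then $G^{*}$ is connected, $L(G)-L(G^{*})$ is a positive semidefinite Laplacian of the remaining edges, and both Laplacians have null space $\mathrm{span}(\mathbf{1})$; the Courant--Fischer characterization of $\mu_{n-1}$ restricted to $\mathbf{1}^{\perp}$ therefore yields $\mu_{n-1}(G)\ge\mu_{n-1}(G^{*})$. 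It thus suffices to prove $\mu_{n-1}(G^{*})\ge\ell/s$.

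The partition $P=\{\{v_0\},\,X\setminus\{v_0\},\,B,\,Y\setminus B\}$ is equitable for $L(G^{*})$, and its $4\times 4$ quotient matrix has characteristic polynomial $\lambda f(\lambda)$ with
$$f(\lambda)=\lambda^{3}-(r+s+\ell+1)\lambda^{2}+\bigl[\ell+(\ell+r)(s+1)\bigr]\lambda-\ell(r+s).$$
The remaining eigenvalues of $L(G^{*})$ come from eigenvectors supported on and summing to zero within a single class of $P$; a direct calculation shows they are $\ell$ with multiplicity $r-2$ (from $X\setminus\{v_0\}$), $r$ with multiplicity $\ell-1$ (from $B$), and $1$ with multiplicity $s-\ell-1$ (from $Y\setminus B$), each of which is at least $1\ge\ell/s$. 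The problem reduces to bounding the smallest positive root of $f$ from below by $\ell/s$.

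The key identity, obtained by direct substitution, is
$$s^{3}f(\ell/s)=\ell(s-\ell)\bigl[(s-1)\ell+sr-s^{3}\bigr],$$
and since $(s-1)\ell+sr\le(s-1)s+s^{2}=2s^{2}-s\le s^{3}$ (equivalent to $s(s-1)^{2}\ge 0$), we have $f(\ell/s)\le 0$. The three roots of $f$ are real (as nonzero eigenvalues of a symmetric positive semidefinite matrix), positive (their product is $\ell(r+s)>0$ and their sum is $r+s+\ell+1\ge 4$), and in the generic case $r\ge 2$, $s>\ell$ we have $f(1)=(r-1)(s-\ell)>0$. Since the roots cannot all lie in $(0,1)$ (their sum being at least $4$), the sign pattern forces exactly one root in $(0,1)$ and two in $(1,\infty)$; combined with $f(\ell/s)\le 0$ and $\ell/s\le 1$, the smallest root must lie in $[\ell/s,1)$. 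The boundary cases $r=1$ (which forces $G=K_{1,n-1}$, giving the equality) and $\ell=s$ (i.e.\ $G=K_{r,s}$ with $\mu_{n-1}=r\ge 1=\ell/s$) are treated directly, and the equality characterization is obtained by tracking the equality conditions back through the edge-deletion and root-localization steps. The main obstacle is this localization of the smallest root of $f$: both the sign computation at $\lambda=\ell/s$ and the structural argument via $f(1)$ and the sum of roots are required to guarantee that the root identified is indeed the smallest positive one.
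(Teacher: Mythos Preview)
Your proof is correct and follows essentially the same strategy as the paper's: pass to a spanning subgraph whose vertex set admits an equitable $4$-block partition, compute the cubic factor of the Laplacian characteristic polynomial coming from the quotient, and locate its smallest positive root above $\ell/|Y|$. The difference is one of bookkeeping. The paper keeps \emph{all} $t$ full-degree vertices of $X$ and obtains a cubic $f$ depending on the extra parameter $t$; it must then argue that the expression $f(\ell/|Y|)$ is maximized at $t=1$ (by checking the sign of the coefficient of $t$) before finishing with a monotonicity argument in $\ell$. You instead fix a single full-degree vertex $v_0$ from the outset, which amounts to building in the $t=1$ reduction; this yields the clean factorization $s^{3}f(\ell/s)=\ell(s-\ell)\bigl[(s-1)\ell+sr-s^{3}\bigr]$ and lets you localize the smallest root directly via $f(1)=(r-1)(s-\ell)$ together with the sum-of-roots bound. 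The paper's root localization uses instead the sign pattern of $f$ at the four block sizes $t,\ell,k,n-k$. For the equality characterization you rely on strictness of $f(\ell/s)<0$ in the generic case $r\ge2$, $\ell<s$; the paper gives a different one-line argument, observing that $\mu_{n-1}(G)=\ell/|Y|$ forces $\ell/|Y|$ to be an algebraic integer, hence $\ell=|Y|$, hence $G=K_{r,s}$ and $r=1$.
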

\begin{proof}{Let $k:=|X|\le n/2$ and so $|Y|=n-k$.
Suppose that $X_0$ is the set of vertices in $X$ of degree
$n-k$ with $t:=|X_0|$ and $Y_0$ is the set of vertices in $Y$ of degree $k$, so $|Y_0|=\ell$.
If $X=X_0$ or $Y=Y_0$, then $G$ is the complete bipartite graph $K_{n-k,k}$ and by Lemma~\ref{QK_n}, $\mu_{n-1}(G)=k$.
Thus assume that $1\le t\le k-1$ and $1\le\ell\le n-k-1$.
Let $H$ be the graph obtained from $G$ by removing the edges between $X\setminus X_0$ and $Y\setminus Y_0$.
By Lemma~\ref{inter}, $\mu_{n-1}(G)\ge\mu_{n-1}(H)$. So it suffices to prove the assertion for $H$.
In $H$, any $v\in X\setminus X_0$ has degree $\ell$ and any $v\in Y\setminus Y_0$ has degree $t$.
It turns out that the partition $(X_0,X\setminus X_0,Y_0,Y\setminus Y_0)$ is an equitable partition for $H$.
The corresponding quotient matrix for $Q(H)$ is
$$R=\begin{pmatrix}n-k&0&\ell&n-k-\ell\\0&\ell&\ell&0\\t&k-t&k&0\\t&0&0&t\end{pmatrix}.$$
The characteristic polynomial of $R$ is $xf(x)$ where
\begin{equation}\label{f(x)}
 f(x):=x^3-(n+\ell+t)x^2+(kt+nk+\ell n-\ell k+2\ell t-k^2)x-\ell tn.
\end{equation}
For simplicity, let $r:=n-k$.
Note that in the matrices $Q(H)-rI$, $Q(H)-\ell I$, $Q(H)-kI$ and $Q(H)-tI$, respectively, the rows corresponding to the vertices in
$X_0$,  $X\setminus X_0$, $Y_0$, and $Y\setminus Y_0$ are identical.
So the four matrices above, respectively, have nullities at least $|X_0|-1$, $|X\setminus X_0|-1$, $|Y_0|-1$, and $|Y\setminus Y_0|-1$.
It follows that $Q(H)$ has eigenvalues $r$, $\ell$, $k$, and $t$ with multiplicities at least $t-1$, $k-t-1$, $\ell-1$, and
$r-\ell-1$, respectively. On the other hand, we have
  \begin{align*}
    f(r)&=-2t(r-n/2)(r-\ell),\\
    f(k)&=-2\ell(k-n/2)(k-t),\\
    f(\ell)&=t(r-\ell)(k-t),\\
    f(t)&=\ell(r-\ell)(k-t).
\end{align*}
If $k<n/2$, then none of  $r,\ell,k,t$ is a zero of $f$. It turns out that the polynomial
\begin{equation}\label{fQH}
x(x-r)^{t-1}(x-k)^{\ell-1}(x-\ell)^{k-t-1}(x-t)^{r-\ell-1}f(x)
\end{equation}
is the characteristic polynomial of $Q(H)$.
If $k=n/2$, then $k=r$ is a zero of $f$. From the above argument,
 $x(x-n/2)^{t+\ell-3}(x-\ell)^{n/2-t-1}(x-t)^{n/2-\ell-1}f(x)$
 is a factor of characteristic polynomial of $Q(H)$ and thus
 $n-1$ eigenvalues of $Q(H)$ are determined. Only one eigenvalue is remained which can be determined as the sum of eigenvalues of $Q(H)$ equals $2e(H)$. It turns out that the remaining eigenvalue is also $n/2$.
 So in this case also the characteristic polynomial of $Q(H)$  is \eqref{fQH}.
Note that as $f$ is a cubic polynomial with a positive leading coefficient and as $f(t)>0$, $f(\ell)>0$, and $f(r)\le0$, it follows that
two zeros of $f$ are greater than $\ell$ and $z_{\min}(f)<\min\{\ell,t\}$. Therefore, $\mu_{n-1}(H)=z_{\min}(f)$  and further
 if we show that $f(\ell/r)<0$, then we can conclude that $z_{\min}(f)\ge \ell/r$.
We proceed to show that $f(\ell/r)<0$.

We have
\begin{align*}
\frac{r^3}{\ell}f\left(\frac{\ell}{r}\right)&=-r^4+(n-t+\ell-tn)r^3+(2t\ell+tn)r^2-(t\ell+\ell^2+\ell n)r+\ell^2\\
&=(-\ell r-r^3+2r^2\ell+r^2n-r^3n)t+\ell^2-r^4-\ell^2r-\ell rn+r^3\ell+r^3n.
\end{align*}
It is easy to see that the coefficient of $t$ is always negative, so $f(\ell/r)$ is maximized only if $t=1$.
Therefore,
$$\frac{r^3}{\ell}f\left(\frac{\ell}{r}\right)\le F(r,\ell):=-r^4+(\ell-1)r^3+(n+2\ell)r^2-(\ell+\ell^2+\ell n)r+\ell^2.$$
As $n\le2r$ and $\ell\le r$, we have
\begin{align*}
    \frac{\partial F}{\partial\ell}&=r^3+2r^2-nr-2\ell(r-1)-r\\
    &\ge r^3+2r^2-2r^2-2r(r-1)-r=r(r-1)^2>0.
\end{align*}
So, $F$ is increasing in $\ell$ and thus $F(r,\ell)< F(r,r)=0$.
Therefore, $f(\ell/r)<0$ and so $\mu_{n-1}(G)\ge\mu_{n-1}(H)\ge\ell/r.$

Now we consider the case of equality. For a star obviously equality occurs. Conversely, suppose that $\mu_{n-1}(G)=\ell/|Y|$.
Note that $\ell/|Y|$ is an algebraic integer only if $\ell=|Y|$. Hence $G=K_{k,n-k}$ and thus $k=\mu_{n-1}(G)=\ell/|Y|=1$. This means that $G$ is a star.
}\end{proof}

\begin{thm}\label{mu1} Let $G$ be a bipartite graph with $n$ vertices,  $e$ edges, and bipartition $(X,Y)$. If $|X|\le|Y|$,  then
$$\mu_1(G)\le |Y|+\frac{e}{|Y|}.$$
Moreover, if $Y$ contains $\ell$ vertices of degree $|X|$, then
$$|Y|+\frac{e}{|Y|}\le n-1+\frac{\ell}{|Y|}.$$
The equality $\mu_1(G)=n-1+\ell/|Y|$ holds if and only if $G$ is a complete bipartite graph.
\end{thm}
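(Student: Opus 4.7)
The plan is to combine Lemma~\ref{SimBip} (which gives $\mu_1(G)=q_1(G)$ for bipartite $G$) with the Merris bound $q_1\le\max_{v}\{d(v)+m(v)\}$ from Lemma~\ref{dv+mv}. The key step is to show that $d(v)+m(v)\le|Y|+e/|Y|$ for every non-isolated vertex $v$, from which the first inequality of the theorem follows by maximising over $v$.

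For this vertex-wise bound I would exploit two simple estimates on $m(v)$ coming from the bipartite structure. First, all neighbours of $v$ lie in the opposite part of the bipartition, so $\sum_{u\sim v}d(u)\le e$ and hence $m(v)\le e/d(v)$. Second, each neighbour has degree at most $|X|$ when $v\in X$ and at most $|Y|$ when $v\in Y$. Splitting on whether $d(v)$ is above or below the threshold $e/|X|$ (if $v\in X$) or $e/|Y|$ (if $v\in Y$) reduces the task to a routine convexity check for $f(d)=d+e/d$; using $|X|\le|Y|$ and $e\le|X|\cdot|Y|$, one finds that the endpoint $d=|Y|$ always dominates, which yields $d(v)+m(v)\le|Y|+e/|Y|$ in every case.

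The second inequality $|Y|+e/|Y|\le n-1+\ell/|Y|$ rearranges to $e\le \ell+|Y|(|X|-1)$ and is immediate from counting edges incident to $Y$: setting $Y_0=\{v\in Y:d(v)=|X|\}$,
\[
e=\sum_{v\in Y}d(v)\le \ell\,|X|+(|Y|-\ell)(|X|-1)=\ell+|Y|(|X|-1),
\]
since each vertex outside $Y_0$ has degree at most $|X|-1$.

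For the equality case, $\mu_1(G)=n-1+\ell/|Y|$ forces both chained inequalities to be equalities. Tightness of the counting step forces every $v\in Y\setminus Y_0$ to have degree exactly $|X|-1$, while tightness of the Merris bound forces $G$ to be regular or bipartite semiregular by Lemma~\ref{dv+mv}; hence all $Y$-degrees are equal and $\ell\in\{0,|Y|\}$. A direct calculation for a semiregular bipartite graph with $Y$-degree $|X|-1$ gives $\mu_1=n(|X|-1)/|X|<n-1$, ruling out $\ell=0$, so $\ell=|Y|$ and $G=K_{|X|,|Y|}$. The principal obstacle is executing the case analysis of the first step cleanly; once the two estimates on $m(v)$ are in hand, everything else reduces to elementary counting and to the equality condition of Merris's bound.
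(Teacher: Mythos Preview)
Your proposal is correct and follows the same overall strategy as the paper: apply the Merris bound (Lemma~\ref{dv+mv}) to $q_1(G)=\mu_1(G)$, establish the pointwise estimate $d(v)+m(v)\le|Y|+e/|Y|$, then bound $e$ by counting degrees in $Y$, and finally use the equality case of Lemma~\ref{dv+mv}. The only substantive difference is in how the pointwise estimate is obtained. The paper gets it in one line via the identity
\[
\left(|Y|-d\right)\left(|Y|-\frac{S}{d}\right)+(e-S)\ge0,\qquad S:=\sum_{u\sim v}d(u),
\]
using only that $d\le|Y|$, $S/d\le|Y|$, and $S\le e$. This replaces your case split on $d$ against $e/|X|$ or $e/|Y|$ together with the convexity of $d+e/d$. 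Your route is sound (the endpoint comparison $e/|X|+|X|\le|Y|+e/|Y|$ indeed reduces to $e\le|X|\,|Y|$), but the factorisation is cleaner and also streamlines the equality analysis: tightness of the displayed identity immediately forces $S=e$ and either $d=|Y|$ or $S/d=|Y|$ for the maximising vertex, which directly produces a vertex in $X$ of full degree $|Y|$ and, via semiregularity, yields $G=K_{|X|,|Y|}$.
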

\begin{proof}{
Let $k:=|X|\le n/2$ and so $|Y|=n-k$.
Let $v\in V(G)$ and $d=d(v)$ and $m(v)=S/d$. Then
\begin{eqnarray}
d+\frac{S}{d}\le(n-k)+\frac{e}{n-k}&\Longleftrightarrow&
    (n-k)d+(n-k)\frac{S}{d}\le(n-k)^2+S+(e-S)\nonumber\\
&\Longleftrightarrow&
    (n-k-d)\left(n-k-\frac{S}{d}\right)+(e-S)\ge0\label{S/d},
\end{eqnarray}
where the last inequality holds as $d$ and $S/d$ are at most $n-k$ and $S\le e$.
 Hence, by Lemma~\ref{dv+mv}, we have
$\mu_1(G)\le n-k+e/(n-k)$.
Since  $Y$ contains $\ell$ vertices of degree $k$, we have $e\le(n-k)(k-1)+\ell$ and so $n-k+e/(n-k)\le n-1+\ell/(n-k)$.

Now we consider the equality case. If $G$ is a complete bipartite graph, then $\mu_1=n$ and the equality holds in both inequalities of the theroem. Conversely, if $\mu_1=n-1+\ell/|Y|$, then  we must have the equality in \eqref{S/d} which is possible only if either $d(v)=n-k$ or $m(v)=S/d=n-k$.
This means that we must have some vertices in $X$ (or in $Y$ if $k=n/2$) of degree $n-k$.
We must also have the equality in \eqref{eq:dv+mv} which implies that all the vertices in $X$ have degree $n-k$, and thus
 $G$ is $K_{n-k,k}$.
}\end{proof}

Combining Theorems~\ref{bip} and \ref{mu1}, we conclude the following. We note that Theorem~\ref{bip} may not hold in the case that $X_0=\emptyset$ in which case we have $\mu_1(G)<n-1$ by Part 4 of Theorem~\ref{t&l}.
\begin{thm} If $G$ or $\Gb$  is a bipartite graph with $n$ vertices, then the following equivalent inequalities hold:
\begin{itemize}
  \item $\mu_1(G)+\mu_1(\Gb)\leq 2n-1$;
  \item $\mu_1(G)-\mu_{n-1}(G)\leq n-1$;
  \item $\mu_{n-1}(G)+\mu_{n-1}(\Gb)\geq1$.
\end{itemize}
Equality holds if and only if $G$ or $\Gb$ is a star.
\end{thm}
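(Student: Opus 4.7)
The plan is to deduce the theorem as a short consequence of Theorems~\ref{mu1} and \ref{bip}, using Lemma~\ref{mubar} to pass freely among the three listed inequalities. First I would note that, by Lemma~\ref{mubar}, $\mu_1(\Gb) = n - \mu_{n-1}(G)$ and $\mu_{n-1}(\Gb) = n - \mu_1(G)$, so all three bullet items reduce to the single statement $\mu_1(G) - \mu_{n-1}(G) \le n - 1$. Since this common form is invariant under $G \leftrightarrow \Gb$, I may assume without loss of generality that $G$ itself is bipartite.

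Let $(X, Y)$ be a bipartition with $k := |X| \le |Y| = n - k$, and let $\ell$ denote the number of vertices of $Y$ having degree $k$. The main case is when $X$ contains at least one vertex of degree $n-k$. In this situation Theorem~\ref{mu1} gives $\mu_1(G) \le n - 1 + \ell/(n-k)$, while Theorem~\ref{bip} gives $\mu_{n-1}(G) \ge \ell/(n-k)$; subtracting these yields the required inequality $\mu_1(G) - \mu_{n-1}(G) \le n - 1$. In the complementary case where no vertex of $X$ has degree $n-k$ (that is, $X_0 = \emptyset$), Theorem~\ref{bip} is not available; here I would invoke the remark preceding the statement, which uses Part~4 of Theorem~\ref{t&l} to give $\mu_1(G) < n - 1$, and then $\mu_1(G) - \mu_{n-1}(G) < n - 1$ follows from $\mu_{n-1}(G) \ge 0$.

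For the equality analysis, tightness in $\mu_1(G) - \mu_{n-1}(G) \le n - 1$ rules out the degenerate case (which is strict), so the main case must apply and both of the applied inequalities must be tight. Tightness in Theorem~\ref{mu1} forces $G = K_{k, n-k}$; then Lemma~\ref{QK_n}(ii) gives $\mu_{n-1}(G) = k$, while the Theorem~\ref{bip} bound reads $\ell/(n-k) = (n-k)/(n-k) = 1$, so tightness there forces $k = 1$, i.e.\ $G = K_{1, n-1}$ is a star. Conversely, Lemma~\ref{QK_n}(ii) applied to $K_{1, n-1}$ gives $\mu_1 - \mu_{n-1} = n - 1$ directly. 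The most delicate point of the argument is the edge case $X_0 = \emptyset$, since Theorem~\ref{bip} is vacuous there and one must appeal to the external bound $\mu_1(G) < n - 1$ cited via Theorem~\ref{t&l}; apart from this, the proof is simply a clean synthesis of the two preceding theorems together with the translation furnished by Lemma~\ref{mubar}.
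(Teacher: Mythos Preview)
Your proposal is correct and follows essentially the same approach as the paper: reduce the three inequalities to one via Lemma~\ref{mubar}, combine Theorem~\ref{mu1} with Theorem~\ref{bip} in the main case, and handle the degenerate case $X_0=\emptyset$ via Part~4 of Theorem~\ref{t&l}. Your equality analysis is slightly more explicit than needed (one could simply cite the equality clauses of Theorems~\ref{mu1} and~\ref{bip} directly rather than recomputing via Lemma~\ref{QK_n}), but the logic is sound and matches the paper's intended argument.
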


 We now proceed to establish a Nordhaus--Gaddum type inequality for the product of Laplacian eigenvalues.
 More precisely we will show that $\mu_1\ov\mu_1\leq n(n-1)$ for bipartite graphs.
 However, this cannot be concluded from Theorems~\ref{bip} and \ref{mu1}.
 We need to obtain sharper bounds on the eigenvalues. This will be done in the next theorem.

\begin{thm}\label{t&l} Let $G$ be a bipartite graph with bipartition $(X,Y)$ and $|Y|\ge|X|\ge2$. Suppose that $X$ contains $t$ vertices of degree $|Y|$ and
$Y$ contains $\ell$ vertices of degree $|X|$.
\begin{enumerate}
  \item If $t\ge2$ and $\ell\ge2$, then either $\mu_{n-1}>1$ or $\mu_1\le n-1$.
  \item For $\ell=1$:  if $t\le k-2$, then $\mu_1<n-1$; if $t=k-1<n/2-1$, then $\mu_1< n-1+1/n$; if $t=k-1=n/2-1$, then $\mu_1=\frac{1}{2}\left(n+\sqrt{n^2-4n+8}\right)$ and $\mu_{n-1}=\frac{1}{2}\left(n-\sqrt{n^2-4n+8}\right)$.

  \item If $t=1$ and $n\ge7$, then $\mu_1< n-1+\ell/n$.
   \item If either $t=0$ or $\ell=0$, then $\mu_1< n-1$.
\end{enumerate}
\end{thm}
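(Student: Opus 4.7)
My plan for Theorem~\ref{t&l} is a case analysis on $(t,\ell)$, built around two main tools: Lemma~\ref{equi<} applied to the partition $\mathcal{P}=(X_0,X\setminus X_0,Y_0,Y\setminus Y_0)$, where $X_0\subseteq X$ and $Y_0\subseteq Y$ are the vertices of maximum bipartite degree; and interlacing (Lemma~\ref{inter}) with the graph $H$ from the proof of Theorem~\ref{bip}, whose nontrivial Laplacian eigenvalues are the three roots of the cubic $f$ in~\eqref{f(x)}. Throughout, set $k=|X|$ and $r=n-k=|Y|$.

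Parts 4, 2, and 3 all follow from Lemma~\ref{equi<} applied to $\mathcal{P}$ or a coarsening of it. For Part~4 with $\ell=0$ (the case $t=0$ being symmetric), every $v\in Y$ has degree at most $k-1$, so the coarse bipartition $\{X,Y\}$ yields
$$R=\begin{pmatrix}r&r\\ k-1&k-1\end{pmatrix}$$
with $\lambda_{\max}(R)=n-1$; Perron--Frobenius strict monotonicity applied to $Q'\ge Q$ then gives $\mu_1(G)<n-1$, since equality would force $d(v)=r$ on $X$ and $d(v)=k-1$ on $Y$, violating $\sum_X d=\sum_Y d$ when $k\ge2$. For Parts 2 and 3 I refine to $\mathcal{P}$, using that vertices outside $X_0$ have degree at most $r-1$ and those outside $Y_0$ at most $k-1$; this yields a $4\times4$ upper-bound matrix $R(n,k,t,\ell)$ whose characteristic polynomial reduces (after extracting one obvious factor) to a cubic, and the claimed bounds correspond to explicit estimates of its largest root. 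In Part~2's distinguished subcase $t=k-1=n/2-1$, the degree caps leave no room for any edge between $X\setminus X_0$ and $Y\setminus Y_0$, so $G=H$; substituting $k=n/2,\ \ell=1,\ t=n/2-1$ into \eqref{f(x)} and pulling out the root $x=n/2$ leaves $x^2-nx+(n-2)$, whose roots are the stated values of $\mu_1$ and $\mu_{n-1}$.

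For Part~1, the plan splits on the sign of $f(1)$. Since $f$ is cubic with positive leading coefficient, satisfies $z_{\min}(f)<\min\{\ell,t\}$, and has its other two roots strictly greater than $\ell$, the sign of $f(1)$ determines whether $z_{\min}(f)$ exceeds $1$. A direct calculation shows that, for $t,\ell\ge2$, $f(1)\le0$ when $n$ lies below a threshold depending on $k,t,\ell$ and $f(1)>0$ above it. In the first regime, Lemma~\ref{inter} gives $\mu_{n-1}(G)\ge\mu_{n-1}(H)=z_{\min}(f)\ge1$. In the second, I would apply Lemma~\ref{equi<} with partition $\mathcal{P}$ to $G$ itself; the resulting $4\times4$ matrix $R$ has useful block structure (notably $A=D$ and $B=C$ when $k=r$) that reduces $\lambda_{\max}(R)$ to the largest root of a $2\times2$ matrix, and the inequality $\lambda_{\max}(R)\le n-1$ should hold exactly in the regime $f(1)\ge0$.

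The main obstacle is verifying that the two regimes cover all parameters with $t,\ell\ge2$ and meet cleanly at the boundary $f(1)=0$, exemplified by $n=10,\ k=5,\ t=\ell=2$: there $\mu_{n-1}(H)=1$ exactly and $\lambda_{\max}(R)=n-1$ exactly, so both bounds are tight simultaneously. Completing Part~1 requires verifying the algebraic identity in $n,k,t,\ell$ that ties the threshold of $f(1)$ to the threshold of $\lambda_{\max}(R)=n-1$, and handling strictness for $G\ne H$ by accounting for extra edges between $X\setminus X_0$ and $Y\setminus Y_0$ and their effect on the Perron--Frobenius strict monotonicity in each of the two bounds.
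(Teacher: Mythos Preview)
Your plan is essentially the paper's proof. Both use the $4\times4$ upper-bound quotient matrix $R$ on the partition $\mathcal P$ (with characteristic polynomial $xg(x)$) to bound $\mu_1$, and the cubic $f$ of Theorem~\ref{bip} together with edge-interlacing to bound $\mu_{n-1}$. The algebraic link you flag as the ``main obstacle'' in Part~1 is in fact the one-line identity
\[
g(n-1)-f(1)=(n-2)(t\ell-t-\ell),
\]
nonnegative exactly when $t,\ell\ge2$. Combined with $g(n-k)=t(2k-n)\le0$ and $g(k)=\ell(n-2k)\ge0$ (so two roots of $g$ lie in $[k,n-k]$), this yields $f(1)\ge0\Rightarrow g(n-1)\ge0\Rightarrow z_{\max}(g)\le n-1$. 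Hence the two regimes \emph{overlap} rather than meet at a sharp boundary, and your strictness worry for $G\ne H$ evaporates: if $\mu_{n-1}\le1$ then $z_{\min}(f)\le1$, hence $f(1)\ge0$, hence $\mu_1\le z_{\max}(g)\le n-1$, which is already the (non-strict) desired conclusion. No separate block-structure argument is needed.

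One genuine gap in your outline for Part~2: when $\ell=1$ and $t=k-1$, every vertex of $Y\setminus Y_0$ already has $k-1=t$ neighbours in $X_0$ and hence no neighbour in $X\setminus X_0$; thus $G=H$ for \emph{all} such $(t,\ell)$, not only in the subcase $k=n/2$. You need this, because the upper-bound cubic $g$ is too weak here: $g(n-1)=(k-t)(n-1-k)+2-n=1-k<0$, so $z_{\max}(g)>n-1$ and your $R$-matrix route cannot deliver $\mu_1<n-1+1/n$. The paper obtains that bound for $t=k-1<n/2-1$ by evaluating $f(n-1+1/n)$ directly, which is legitimate precisely because $\mu_1=z_{\max}(f)$ once $G=H$.
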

\begin{proof}{
Let  $k:=|X|\le n/2$ and $|Y|=n-k$.
If $t=0$, i.e. all the vertices in $X$ have degree at most $n-k-1$, then by applying Lemma~\ref{equi<}
for bipartition of $G$, we have
$$\mu_1<\lambda_{\max}
\begin{pmatrix}
 n-k-1 & n-k-1 \\  k & k \end{pmatrix}=n-1.$$
Similarly, if $\ell=0$, then $\mu_1<n-1$. This implies Case 4.
In the rest of the proof we assume that  $t\ge1$ and $\ell\ge1$.

\noi{\bf Case 1.} $t\ge2$ and $\ell\ge2$.

Suppose that $X_0$ is the set of vertices in $X$ of degree
$n-k$ and $Y_0$ is the set of vertices in $Y$ of degree $k$.
If we apply Lemma~\ref{equi<} on $Q(G)$
with the partition $(X_0,X\setminus X_0,Y_0,Y\setminus Y_0)$,
it turns out that $\mu_1$ does not exceed the largest eigenvalue of the matrix
$$R=\begin{pmatrix}n-k&0&\ell&n-k-\ell\\0&n-k-1&\ell&n-k-\ell-1\\t&k-t&k&0\\t&k-t-1&0&k-1\end{pmatrix}.$$
The characteristic polynomial of $R$ is $xg(x)$ where
$$g(x)=x^3+(2-2n)x^2+(n^2+nk-2n-k^2-\ell-t)x+\ell n+nk^2+2nk-\ell k+tk-n^2k-2k^2.$$
  We have
$$g(n-1)=(k-t-1)n-k^2+\ell+t+tk-\ell k+1.$$
On the other hand, from the proof of Theorem~\ref{bip} we know that $\mu_{n-1}\ge z_{\min}(f)$ with $f(x)$  given in \eqref{f(x)}.
Also, if we have $f(1)<0$, then  $z_{\min}(f)>1$.
Since $g(n-k)=t(2k-n)\le0$ and $g(k)=\ell(n-2k)\ge0$, two zeros of $g$ are at most $n-k$.  It follows that if $g(n-1)>0$, then $z_{\max}(g)< n-1$.
Next, note that
$$f(1)=(\ell+k-t\ell-1)n-t-\ell-\ell k+kt+2t\ell-k^2+1,$$
and thus
$$g(n-1)-f(1)=(n-2)(t\ell-t-\ell).$$
By the assumptions $t\ge2$ and $\ell\ge2$, we have $t\ell-t-\ell\ge0$.
Hence $g(n-1)-f(1)\ge0$.
Now, if $\mu_{n-1}\le1$, then $z_{\min}(f)\le1$, hence $f(1)\ge0$.
From the above equation it follows that $g(n-1)\ge0$, and so $\mu_1\le z_{\max}(g)\le n-1$.
Therefore, we have either $\mu_{n-1}>1$ or $\mu_1\le n-1$.

\noi{\bf Case 2.} $\ell=1$.

In this case we have $g(n-1)=(k-t)(n-1-k)+2-n$. It follows that $g(n-1)>0$ provided that $k-t\ge2$.
Hence, if $t\le k-2$, then $\mu_1<n-1$, and we are done. So, assume that $t=k-1$.
It turns out that in this subcase, there are no edges between $X\setminus X_0$ and $Y\setminus Y_0$ and hence $G$ is isomorphic to the graph $H$ of Theorem~\ref{bip}. So $\mu_1=z_{\max}(f)$.
First assume that $k<n/2$. We have,
$$f\left(n-1+\frac{1}{n}\right)=\frac{1}{n^3}\Big(n^4-2(k+1)n^3+(2k+3)n^2-(k+3)n+1\Big),$$
which is minimized when $k$ is maximized, i.e. $k=(n-1)/2$. Thus $f(n-1+1/n)\ge (3n^2/2-5n/2+1)/n^3$ which is positive for $n\ge2$.
Therefore, $\mu_1<n-1+1/n$.
If $k=n/2$, then the three zeros of $f$ are $\frac{n}{2},\frac{1}{2}\left(n\pm\sqrt{n^2-4n+8}\right)$.

\noi{\bf Case 3.} $t=1$.

In this case we show that $z_{\max}(g)< n-1+\ell/n$. It suffices to show that $g(n-1+\ell/n)>0$.
We have $g(n-1+\ell/n)=n^{-3}h(k,\ell)$ where
$$h(k,\ell)=(k-2)n^4+(k-k^2+\ell+2)n^3-\ell(k^2+2)n^2-n\ell^2+\ell^3.$$
 By computation, for $n\ge7$, we have
\begin{align*}
    h(2,\ell)&=\ell (n^3-6n^2-n\ell+\ell),\\
             &\ge\ell (n^3-6n^2-n^2+\ell)>0.
\end{align*}
So we may assume that $k\ge3$.
We may write $h(k,\ell)=h_1(\ell)+n^2h_2(k,\ell)$ where
\begin{align*}
    h_1(\ell)&=2n^3-2\ell n^2-n\ell^2+\ell^3,\\
    h_2(k,\ell) &=(k-2)n^2+(k-k^2)n+\ell(n-k^2).
\end{align*}
The function $h_1$ is decreasing in $\ell$, and so $h_1(\ell)> h_1(n)=0$.
Now for showing the positivity of $h_2$ we consider two subcases: $n\ge k^2$ and $n<k^2$.
If $n\ge k^2$, then
$$h_2(k,\ell)>(k-2)n^2-(k-1)nk$$
which is positive for $k\ge3$.
Next suppose that  $n<k^2$. As $\ell\le n-k-1$ (for if $\ell=n-k$, then $t=k\ge2$), we have
\begin{align}
    h_2(k,\ell) &\ge(k-2)n^2+(k-k^2)n+(n-k-1)(n-k^2)\nonumber\\
     &=(k-1)n^2-(2k^2+1)n+k^2(k+1).\label{h2(k,l)}
\end{align}
If $k=3$, then the right side of \eqref{h2(k,l)} becomes $2n^2-19n+36$ which is positive for $n\ge7$.
The zeros of the quadratic form $(k-1)x^2-(2k^2+1)x+k^2(k+1)$ are
$(2k^2+1\pm\sqrt{8k^2+1})/(2k-2)$, and for $k\ge4$ we have $$\frac{2k^2+1+\sqrt{8k^2+1}}{2k-2}<2k.$$
Since $n\ge2k$, it follows that $h_2(k,\ell)\ge0$.

Therefore, $g(n-1+\ell/n)>0$ and thus
$\mu_1\le z_{\max}(g)< n-1+\ell/n$.
}\end{proof}



\begin{thm} Let $G$ be a bipartite graph with $n$ vertices. Then $\mu_1(G)\mu_1(\Gb)\leq n(n-1)$, equality holds if and only if either $G$ or $\Gb$ is a star.
\end{thm}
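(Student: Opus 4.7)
The identity $\mu_1(\ov G)=n-\mu_{n-1}(G)$ from Lemma~\ref{mubar} turns the target estimate into $\mu_1(G)\bigl(n-\mu_{n-1}(G)\bigr)\le n(n-1)$. My plan is to run the four-case split on the parameters $(t,\ell)$ of Theorem~\ref{t&l} (assuming $|X|\ge 2$ and $|Y|\ge|X|$), supplemented by a short direct treatment of $|X|\le 1$ and of the small values of $n$ excluded from Part~3.

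Whenever Theorem~\ref{t&l} already delivers $\mu_1(G)\le n-1$, I am done at once, since Lemma~\ref{mu1=n} gives $\mu_1(\ov G)\le n$. This disposes of Part~4, the subcase $t\le k-2$ of Part~2, and the branch of Part~1 in which $\mu_1\le n-1$. In the other branch of Part~1, where $\mu_{n-1}(G)>1$, the identity forces $\mu_1(\ov G)<n-1$, and combined with $\mu_1(G)\le n$ the product is strictly below $n(n-1)$.

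The real work sits in the two remaining cases, in which $\mu_1(G)$ can exceed $n-1$: the subcase $t=k-1$ of Part~2 and the whole of Part~3. Here I would pair the sharper upper bound $\mu_1(G)<n-1+\ell/n$ supplied by Theorem~\ref{t&l} with the compensating lower bound $\mu_{n-1}(G)\ge\ell/(n-k)$ coming from Theorem~\ref{bip}. Substituting both into $\mu_1(G)\bigl(n-\mu_{n-1}(G)\bigr)$ and expanding reduces the target inequality to the elementary estimate $\frac{n-1}{n-k}+\frac{\ell}{n(n-k)}\ge 1$, which holds whenever $k\ge 1$. The boundary subcase $t=k-1=n/2-1$ of Part~2 has the closed-form eigenvalues recorded in Theorem~\ref{t&l} and satisfies $\mu_1(G)=\mu_1(\ov G)$, so $\mu_1(G)^2\le n(n-1)$ reduces to the polynomial inequality $n^3-4n^2+4\ge 0$, which is valid for $n\ge 4$. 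The small cases $n\le 6$ left out of Part~3, together with the degenerate cases $|X|\le 1$ (in which $G$ is a star $K_{1,j}$ plus isolated vertices), are handled by direct computation; the star $K_{1,n-1}$ is where equality is actually attained.

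The chief obstacle, I expect, is not the inequality itself but the equality analysis: strict inequality comes almost for free in every branch, so pinning down the equality case forces one to trace tightness back through the chains of estimates in Theorems~\ref{bip} and~\ref{t&l}. The equality clause of Theorem~\ref{bip}, which already forces $G$ to be a star, is what should ultimately close the characterisation.
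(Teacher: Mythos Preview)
Your proposal is correct and follows essentially the same route as the paper: reduce via $\mu_1(\ov G)=n-\mu_{n-1}(G)$, run the case split of Theorem~\ref{t&l}, pair the resulting upper bounds on $\mu_1$ with the lower bound $\mu_{n-1}\ge\ell/(n-k)$ from Theorem~\ref{bip}, and dispose of the degenerate and small-$n$ cases directly. Your explicit reduction to the inequality $\frac{n-1}{n-k}+\frac{\ell}{n(n-k)}\ge1$ is exactly the computation the paper leaves implicit when it writes ``follows similarly'' for the $t=1$ case.
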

\begin{proof}{As discussed in the paragraph following Conjecture~\ref{LaplSpread}, $\mu_1(G)\le n$ and $\mu_1(\Gb)\le n-1$ with equality if and only if $G$ is a star. Hence, we may assume that both $G$ and $\Gb$ are connected.
We use the notation of Theorem~\ref{t&l}.
If $k=1$, then $G$ is a star and we are done.
So assume that $k\ge2$. The connectedness of $\Gb$ also implies that $t\le k-1$ and $\ell\le n-k-1$.
The theorem for bipartite graphs under the above conditions with $n\le6$ can be easily verified.
So we may assume that $n\ge7$.

By Lemma~\ref{mubar}, we have $\ov\mu_1=n-\mu_{n-1}$.
Therefore, in view of Theorem~\ref{bip},
\begin{equation}\label{mu1bar}
   \ov\mu_1<n-\frac{\ell}{n-k}.
\end{equation}

If one of the cases $t=\ell=0$, or $t\ge2,\ell\ge2$, or $\ell=1,t\le k-2$ occurs, then by Theorem~\ref{t&l} we have either $\mu_1\le n-1$ or $\ov\mu_1<n-1$,
and we are done.
If $\ell=1$ and $t=k-1<n/2-1$, then by Theorem~\ref{t&l} and \eqref{mu1bar},
$$\mu_1\ov\mu_1<\left(n-1+\frac{1}{n}\right)\left(n-\frac{1}{n-k}\right)<n(n-1).$$
If $\ell=1$ and $t=k-1=n/2-1$, then by Theorem~\ref{t&l},
$$\mu_1\ov\mu_1=\frac{1}{4}\left(n+\sqrt{n^2-4n+8}\right)^2$$
which is smaller than $n(n-1)$ for $n\ge4$.
If $t=1$, the result follows similarly from Theorem~\ref{t&l} and \eqref{mu1bar}.
}\end{proof}

Based on the above results, we pose the following conjecture.

\begin{conj}  For any graph $G$ with $n$ vertices, $\mu_1(G)\mu_1(\Gb)\le n(n-1)$
 and  equality holds if and only if $G$ or $\Gb$ is isomorphic to the join of $K_1$ and a disconnected graph of order $n-1$.
\end{conj}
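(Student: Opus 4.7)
The plan is to extend the bipartite arguments of Theorems~\ref{bip}--\ref{t&l} to arbitrary graphs. Using Lemma~\ref{mubar}, the desired inequality $\mu_1(G)\mu_1(\Gb)\le n(n-1)$ is equivalent to: for every graph $G$ with $\mu_1(G)>n-1$,
$$\mu_{n-1}(G)\ge n-\frac{n(n-1)}{\mu_1(G)}.$$
At the conjectured extremum $G=K_1\vee H$ with $H$ disconnected one has $\mu_1(G)=n$ and $\mu_{n-1}(G)=1$, so the bound degenerates to $\mu_{n-1}(G)\ge 1$ in that limit; this is the asymptote the argument must approach without overshooting.

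First I would reduce to the case where $G$ and $\Gb$ are both connected. If $\Gb$ is disconnected, then by the argument given after Conjecture~\ref{LaplSpread} we have $\mu_1(G)\le n$ and $\mu_1(\Gb)\le n-1$, with equality in the second inequality precisely when $G=K_1\vee\ov H$ with $\ov H$ disconnected. This instantly yields $\mu_1(G)\mu_1(\Gb)\le n(n-1)$ together with the claimed extremal characterization; the case where $G$ is disconnected is symmetric.

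For the remaining case in which both $G$ and $\Gb$ are connected (so $\mu_1(G),\mu_1(\Gb)<n$), the strategy is to extend Theorem~\ref{bip} by producing a lower bound on $\mu_{n-1}(G)$ that depends on the number of near-universal vertices of $G$. Partitioning $V(G)$ by degree classes (vertices of degree $n-1$, vertices of degree $\Delta(G)$, and the remaining vertices), one would apply Lemma~\ref{equi<} to bound $\mu_1(G)$ from above by the largest root of the quotient characteristic polynomial, mimicking the derivation of $f(x)$ in Theorem~\ref{bip}. In parallel, when $G$ has no vertex of degree $n-1$ (so $\delta(\Gb)\ge 1$), Lemmas~\ref{dv+mv} and \ref{das} applied jointly to $G$ and $\Gb$ through the relation $d_G(v)+d_{\Gb}(v)=n-1$ should yield $\mu_1(G),\mu_1(\Gb)\le n-1$ and close the inequality with room to spare.

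The main obstacle is that the Laplacian spread conjecture $\mu_1(G)+\mu_1(\Gb)\le 2n-1$ is itself still open for non-bipartite graphs, and even its full resolution would not be enough: it only gives $\mu_1(G)\mu_1(\Gb)\le(2n-1)^2/4=n^2-n+\tfrac14$ by AM--GM, which exceeds the target by $\tfrac14$. The product conjecture is therefore a strict refinement of the spread conjecture and requires a stability/asymmetry statement of the form: when $\mu_1(G)$ is bounded away from $n$, the sum $\mu_1(G)+\mu_1(\Gb)$ must be bounded away from $2n-1$ by a quantitatively explicit amount. I expect this to be the hardest step, because the canonical equitable bipartition that powered Theorem~\ref{bip} is not available in the general setting, and a genuinely new structural idea may be needed to capture the asymmetry of the extremal configuration $K_1\vee H$.
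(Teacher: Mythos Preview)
The statement you are attempting to prove is posed in the paper as a \emph{conjecture}, not a theorem; the paper supplies no proof of it. The paper establishes the bipartite case in the preceding theorem and then explicitly states the general case as Conjecture~15 (in the paper's numbering). Consequently there is no ``paper's own proof'' against which to compare your proposal.

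Your write-up is not a proof but a research outline, and you yourself identify the essential gap: the Laplacian spread conjecture $\mu_1(G)+\mu_1(\Gb)\le 2n-1$ is open for general graphs, and even its resolution would only give $\mu_1(G)\mu_1(\Gb)\le n^2-n+\tfrac14$ via AM--GM, which misses the target by $\tfrac14$. The reduction to the case where both $G$ and $\Gb$ are connected is correct and matches the paper's reasoning after Conjecture~\ref{LaplSpread}. However, the heuristic step where you claim that Lemmas~\ref{dv+mv} and \ref{das} ``should yield $\mu_1(G),\mu_1(\Gb)\le n-1$'' whenever $G$ has no vertex of degree $n-1$ is false in general: there are graphs with $\Delta(G)<n-1$ and $\mu_1(G)>n-1$ (indeed, the bipartite analysis in Theorem~\ref{t&l} already shows $\mu_1$ can exceed $n-1$ when $t\ge1$, and the phenomenon is not bipartite-specific). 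So that branch of your plan would not close. In short, your proposal correctly diagnoses that a genuinely new idea is required, and the paper agrees by leaving the statement as a conjecture.
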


\section{Nordhaus--Gaddum type inequalities for signless Laplacian eigenvalues}

In this section we study Nordhaus--Gaddum type inequalities for signless Laplacian eigenvalues of a graph.
 Aouchiche and Hansen \cite{hans} surveyed Nordhaus--Gaddum type inequalities for graph parameters. Among other things, they gave the following two conjectures which is the subject of this section.

\begin{conj} {\rm(Aouchiche and Hansen \cite{hans})}\label{q+qb} Let $G$ be a simple graph on $n\geq 2$ vertices. Then
$q_1(G)+q_1(\overline{G})\leq3n-4$.
Equality holds if and only if $G$ is the star $K_{1,n-1}$.
 \end{conj}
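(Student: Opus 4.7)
The plan is to bound $q_1(G)$ and $q_1(\overline G)$ individually by combining Merris's inequality (Lemma~\ref{dv+mv}) with Das's estimate (Lemma~\ref{das}), and then to add the two bounds. Since the right-hand side of \eqref{eq:das} is a uniform bound on $d(v)+m(v)$ independent of $v$, Lemma~\ref{dv+mv} yields
$$
q_1(G)\ \le\ \frac{2e}{n-1}+\frac{n-2}{n-1}\Delta+(\Delta-\delta)\left(1-\frac{\Delta}{n-1}\right),
$$
and the same estimate applies to $q_1(\overline G)$ with $\Delta,\delta,e$ replaced by $\overline\Delta=n-1-\delta$, $\overline\delta=n-1-\Delta$, and $\overline e=\binom{n}{2}-e$.

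Setting $s:=\Delta-\delta$ one has $\overline\Delta-\overline\delta=s$, $e+\overline e=\binom{n}{2}$, and $\Delta+\overline\Delta=(n-1)+s$. Adding the two estimates, the three pairs of terms simplify nicely: the first pair contributes $\frac{2e+2\overline e}{n-1}=n$; the second pair contributes $\frac{n-2}{n-1}(\Delta+\overline\Delta)=(n-2)+\frac{(n-2)s}{n-1}$; and the third pair contributes $s\bigl(2-\frac{\Delta+\overline\Delta}{n-1}\bigr)=s-\frac{s^2}{n-1}$. Collecting,
$$
q_1(G)+q_1(\overline G)\ \le\ 2n-2+\frac{(2n-3)s-s^2}{n-1}.
$$

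The desired bound $3n-4$ is therefore equivalent to $s^2-(2n-3)s+(n-1)(n-2)\ge 0$. By Vieta's relations the quadratic factors as $(s-(n-2))(s-(n-1))$; since $s$ is an integer in $\{0,1,\ldots,n-1\}$ and the open interval $(n-2,n-1)$ contains no integer, the quadratic is non-negative, establishing the inequality.

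For the equality case, tightness forces $s\in\{n-2,n-1\}$. The value $s=n-1$ is excluded because $\Delta=n-1$ forces $\delta\ge 1$, so $s=n-2$ and $(\Delta,\delta)$ is either $(n-1,1)$ or $(n-2,0)$. Equality in Lemma~\ref{dv+mv}, applied to the non-trivial components, forces $G$ (respectively $\overline G$) to be regular or bipartite semiregular; combined with the degree constraints, only the semiregular bipartite graph $K_{1,n-1}$ fits in the first subcase, while the second subcase symmetrically yields $\overline G=K_{1,n-1}$. I expect the main obstacle to be not the inequality itself---which drops out of a clean two-line calculation---but the equality analysis, where one must reconcile the tightness conditions of Lemmas~\ref{das} and~\ref{dv+mv} simultaneously, and handle the possibility of isolated vertices in $G$ or $\overline G$ (for which Das's stated equality criterion is not directly stated).
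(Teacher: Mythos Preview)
Your proposal is correct and is essentially the paper's own argument. The paper packages the computation into two pieces: Theorem~\ref{q1+q1bar} establishes the intermediate bound
\[
q_1(G)+q_1(\overline G)\le 2n-2+s\Bigl(2-\tfrac{s+1}{n-1}\Bigr),\qquad s=\Delta-\delta,
\]
by exactly the addition of Das/Merris bounds you describe, and then a short corollary shows this extra term is at most $n-2$ via monotonicity in $s$. Your factoring $(s-(n-2))(s-(n-1))\ge0$ is just another way to do that final step.

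One remark on the equality analysis: your stated worry about needing Das's equality criterion for disconnected graphs is unnecessary in your own argument. Once the quadratic forces $s=n-2$ (the case $s=n-1$ being impossible, as you note), you have $(\Delta,\delta)\in\{(n-1,1),(n-2,0)\}$; in either case the member of $\{G,\overline G\}$ with maximum degree $n-1$ is connected, and Merris's equality condition applied to \emph{that} graph alone already forces it to be bipartite semiregular with a vertex of full degree, hence $K_{1,n-1}$. You never need to invoke Das's equality case at all --- the degree constraint $\Delta=n-1$ does its work. The paper, by contrast, characterises equality in Theorem~\ref{q1+q1bar} first (regular or $K_{1,n-1}$), using Das's equality condition to force $\Delta=n-1$; but since you already have $\Delta=n-1$ from $s=n-2$, your route is slightly shorter here.
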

\begin{conj} {\rm(Aouchiche and Hansen \cite{hans})}\label{q.qb} Let $G$ be a simple graph on $n\geq 2$ vertices. Then
$q_1(G)\cdot q_1(\overline{G})\leq2n(n-2)$.
Equality holds  if and only if $G$ is the star $K_{1,n-1}$.
 \end{conj}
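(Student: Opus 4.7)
My first step would be to test the conjecture against extremal families before committing to a proof strategy. For $d$-regular graphs one immediately has $q_1(G)\cdot q_1(\Gb) = 4d(n-1-d) \le (n-1)^2$, far below $2n(n-2)$, so the bound is comfortable in the regular case. At the other extreme, the putative extremal graph $K_{1,n-1}$ achieves the bound with equality while having a single universal vertex and $n-1$ pendants. The natural interpolation is to allow a small cluster of very-high-degree vertices, so that $\Gb$ still contains a large clique and both $q_1$'s stay close to $n$. This suggests searching among the complete split graphs $H_{n,p} := K_p \vee \overline{K_{n-p}}$, whose complement is $K_{n-p} \cup \overline{K_p}$ and whose $p=1$ specialisation is exactly the star.

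By Lemma~\ref{QK_n}, $q_1(\overline{H_{n,p}}) = 2(n-p-1)$. For $q_1(H_{n,p})$ itself, the partition $(V(K_p),\, V(\overline{K_{n-p}}))$ is equitable with signless Laplacian quotient matrix
$$R = \begin{pmatrix} n+p-2 & n-p \\ p & p \end{pmatrix},$$
so by Lemma~\ref{equi},
$$q_1(H_{n,p}) = \tfrac{1}{2}\Bigl((n+2p-2) + \sqrt{(n+2p-2)^2 - 8p(p-1)}\Bigr).$$
Substituting $p = \alpha n$ and retaining only leading-order terms, the product $q_1(H_{n,p})\cdot q_1(\overline{H_{n,p}})$ becomes
$$n^2\,(1-\alpha)\Bigl[(1+2\alpha) + \sqrt{1 + 4\alpha - 4\alpha^2}\Bigr] + O(n).$$

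I would then analyse the leading-coefficient function $P(\alpha) := (1-\alpha)\bigl[(1+2\alpha) + \sqrt{1 + 4\alpha - 4\alpha^2}\bigr]$. At the star ($\alpha \to 0$) one recovers $P(0) = 2$, matching the conjectured bound exactly. A one-line derivative computation gives $P'(0) = 2 > 0$, so $P$ strictly exceeds $2$ on an interval to the right of $0$; a short numerical optimisation locates a maximum near $\alpha \approx 0.17$ with $P(\alpha) \approx 2.15$. Setting $H_n := H_{n,\lfloor 0.17\, n \rfloor}$ then produces an infinite family with $q_1(H_n)\cdot q_1(\overline{H_n}) \approx 2.15\, n^2 + O(n)$, which exceeds $2n(n-2)$ for all sufficiently large $n$ and refutes the conjecture.

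The main obstacle is psychological rather than technical: one must decide early to hunt for counterexamples instead of continuing to sharpen star-based estimates, since the bound is tight at $K_{1,n-1}$ and the preceding conjecture (on the sum $q_1+\overline{q_1}$) actually holds with $K_{1,n-1}$ as extremiser. Once this decision is made, the complete split graph is the natural candidate because both $H_{n,p}$ and $\overline{H_{n,p}}$ have signless Laplacian spectra controlled by very small matrices via Lemmas~\ref{QK_n} and \ref{equi}; the remaining work is a calculus exercise in a single variable, plus the routine check that rounding $\alpha n$ to an integer perturbs the product only by $O(n)$ and so preserves the leading coefficient $2.15$.
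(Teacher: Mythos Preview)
Your disproof is correct and follows essentially the same route as the paper: both use the complete split graphs $K_p\vee\overline{K_{n-p}}$, compute $q_1$ of the complement via Lemma~\ref{QK_n} and $q_1$ of the graph itself via the equitable two-cell partition and Lemma~\ref{equi}, and then choose $p$ proportional to $n$ to push the leading coefficient above $2$. The paper simply fixes $p=\lfloor n/6\rfloor$ from the outset, which makes your leading coefficient $P(1/6)=\tfrac{5}{18}(4+\sqrt{14})\approx 2.15$ come out in closed form; your optimisation in $\alpha$ recovers the same value (the maximiser is in fact just below $1/6$, not $0.17$, but this is immaterial).
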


We prove Conjecture~\ref{q+qb} by establishing a more general result and disprove Conjecture~\ref{q.qb}.

\begin{thm}\label{q1+q1bar} Let $G$ be a simple graph on $n\geq 2$ vertices. Then
\begin{equation}\label{eq:q1+q1bar}
q_1(G)+q_1(\overline{G})\leq2n-2+(\Delta-\delta)\left(2-\frac{\Delta-\delta+1}{n-1}\right).
\end{equation}
Equality holds if and only if $G$ is either regular or the star $K_{1,n-1}$.
\end{thm}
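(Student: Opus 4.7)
The plan is to combine the bound of Merris (Lemma~\ref{dv+mv}) with that of Das (Lemma~\ref{das}) to obtain an upper estimate for $q_1(G)$ in terms of the global parameters $n,e,\Delta,\delta$, then apply it to both $G$ and $\overline G$ and add. Concretely, for any $v\in V(G)$ Lemma~\ref{das} gives a bound on $d(v)+m(v)$ whose right--hand side does not depend on $v$, so taking the maximum over $v$ and using Lemma~\ref{dv+mv} yields
\begin{equation*}
q_1(G)\le \frac{2e}{n-1}+\frac{n-2}{n-1}\Delta+(\Delta-\delta)\left(1-\frac{\Delta}{n-1}\right),
\end{equation*}
with $e=e(G)$, $\Delta=\Delta(G)$, $\delta=\delta(G)$.

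Next I would write the same bound for $\overline G$, using $e(\overline G)=\binom{n}{2}-e$, $\Delta(\overline G)=n-1-\delta$, $\delta(\overline G)=n-1-\Delta$, which crucially give $\Delta(\overline G)-\delta(\overline G)=\Delta-\delta$. Adding the two inequalities, the $\pm 2e/(n-1)$ terms cancel and a short manipulation gives
\begin{equation*}
q_1(G)+q_1(\overline G)\le 2n-2+(\Delta-\delta)\left[\frac{n-2}{n-1}+1-\frac{\Delta-\delta}{n-1}\right]=2n-2+(\Delta-\delta)\left(2-\frac{\Delta-\delta+1}{n-1}\right),
\end{equation*}
which is precisely \eqref{eq:q1+q1bar}. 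This step is essentially mechanical; the one identity doing the real work is $\Delta(\overline G)-\delta(\overline G)=\Delta-\delta$, which is what makes the clean closed form possible.

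Sufficiency of the equality conditions is a direct verification: if $G$ is $d$-regular then $q_1(G)=2d$ and $q_1(\overline G)=2(n-1-d)$, so both sides equal $2n-2$ (with $\Delta-\delta=0$); for $G=K_{1,n-1}$ Lemma~\ref{QK_n} gives $q_1(K_{1,n-1})+q_1(K_{n-1}\cup K_1)=n+(2n-4)=3n-4$, matching the bound with $\Delta-\delta=n-2$. The necessity of these two cases is the main obstacle. Equality in \eqref{eq:q1+q1bar} forces simultaneous tightness in both Merris's and Das's bounds for $G$ and for $\overline G$; since at least one of $G,\overline G$ is connected, Merris's equality condition makes the connected one either regular or bipartite semiregular, while Das's equality condition imposes the rigid local structure ``$d(v)=n-1$, or every neighbour of $v$ has degree $\Delta$ and every non--neighbour has degree $\delta$'' at the extremal vertex. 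I would then split on whether $\Delta=\delta$: in that case $G$ is regular and we are done; otherwise the bipartite--semiregular alternative with the Das neighbourhood condition in both $G$ and $\overline G$ should be incompatible unless one part has size $1$, i.e.\ unless $G$ has a vertex of degree $n-1$, which in combination with the complementary constraint on $\overline G$ collapses $G$ to the star $K_{1,n-1}$.
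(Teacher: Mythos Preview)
Your proposal is correct and follows essentially the same route as the paper: bound $q_1$ via Merris (Lemma~\ref{dv+mv}) composed with Das (Lemma~\ref{das}), add the resulting estimates for $G$ and $\overline G$ using $e+\overline e=\binom{n}{2}$ and $\overline\Delta-\overline\delta=\Delta-\delta$, then for the equality case exploit that Merris-tightness on a connected non-regular graph forces bipartite semiregularity while Das-tightness then forces $\Delta=n-1$, yielding $K_{1,n-1}$. The only places where your write-up is slightly looser than the paper's (the phrase ``should be incompatible'') coincide with where the paper itself is terse, so there is no missing idea.
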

\begin{proof}{By Lemmas~\ref{das} and \ref{dv+mv} we have
$$q_1+\overline q_1\le\frac{2e+2\ov e}{n-1}+\frac{n-2}{n-1}(\Delta+\ov\Delta)+(\Delta-\delta)\left(1-\frac{\Delta}{n-1}\right)+(\ov\Delta-\ov\delta)\left(1-\frac{\ov\Delta}{n-1}\right).
$$
By the fact that $e+\ov e=n(n-1)/2$, $\ov\Delta=n-1-\delta$ and $\ov\delta=n-1-\Delta$, we obtain
$$q_1+\overline q_1\le n+\frac{n-2}{n-1}(\Delta-\delta+n-1)+(\Delta-\delta)\left(1-\frac{\Delta}{n-1}+\frac{\delta}{n-1}\right).$$
This implies \eqref{eq:q1+q1bar}.

Now we consider, the  equality case in \eqref{eq:q1+q1bar}.
If $G$ is regular both side of \eqref{eq:q1+q1bar} are equal $2n-2$. If $G=K_{1,n-1}$, both sides of \eqref{eq:q1+q1bar} are equal $3n-4$.
Now, assume that equality occurs in \eqref{eq:q1+q1bar} for $G$.
Then the equalities must hold in \eqref{eq:das} and \eqref{eq:dv+mv} for both $G$ and $\ov G$.
We may assume that $G$ is not regular. By the case of equality in \eqref{eq:dv+mv}, $G$ must be a bipartite semiregular for which the equality does not occur in  \eqref{eq:das} unless $\Delta=n-1$. Hence $G=K_{1,n-1}$.
}\end{proof}

\begin{cor} Conjecture~$\ref{q+qb}$ holds.
\end{cor}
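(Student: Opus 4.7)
The plan is to deduce the corollary directly from Theorem~\ref{q1+q1bar} by maximising the right-hand side of \eqref{eq:q1+q1bar} over its free parameter. Writing $x:=\Delta-\delta$, that bound reads
$$q_1(G)+q_1(\ov G)\le 2n-2+2x-\frac{x(x+1)}{n-1},$$
so it suffices to establish $f(x)\le n-2$, where $f(x):=2x-x(x+1)/(n-1)$, with equality only in the star case.

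First I would observe that $x$ is confined to $\{0,1,\ldots,n-2\}$: whenever $\Delta=n-1$, a vertex of maximum degree is adjacent to every other vertex, forcing $\delta\ge 1$, so $x=n-1$ is impossible. After clearing denominators, the inequality $f(x)\le n-2$ is equivalent to
$$x^{2}-(2n-3)x+(n-2)(n-1)\ge 0.$$
The discriminant of this quadratic is $(2n-3)^{2}-4(n-2)(n-1)=1$, so it factors as $(x-(n-2))(x-(n-1))$. Both factors are non-positive on $0\le x\le n-2$, which gives the desired bound and singles out $x=n-2$ as the unique value at which $f(x)=n-2$.

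For the equality clause, combining $x=n-2$ with the equality case of Theorem~\ref{q1+q1bar} leaves two options: either $G$ is regular, in which case $x=0=n-2$ forces $n=2$ and $G=K_2=K_{1,1}$ is a star; or $G=K_{1,n-1}$ from the outset. Either way $G$ is the star $K_{1,n-1}$.

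There is no genuine obstacle here: the argument is a one-variable quadratic estimate sitting on top of Theorem~\ref{q1+q1bar}. The only point worth flagging is that the discriminant works out to $1$, which is precisely what makes the maximum of $f$ over the admissible integer range $\{0,\ldots,n-2\}$ occur at the endpoint $x=n-2$ and match the conjectured bound $3n-4$ on the nose; any larger discriminant would have split the two roots further apart and left room for $f$ to exceed $n-2$ at some interior integer $x$.
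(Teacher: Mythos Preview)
Your proof is correct and follows the same route as the paper: both reduce the corollary to Theorem~\ref{q1+q1bar} and then bound $f(x)=x\bigl(2-\tfrac{x+1}{n-1}\bigr)$ over $0\le x=\Delta-\delta\le n-2$. The only cosmetic difference is that the paper uses the derivative $f'(x)=2-(2x+1)/(n-1)>0$ for $x<n-3/2$ to conclude $f(x)\le f(n-2)=n-2$, whereas you factor the equivalent quadratic as $(x-(n-2))(x-(n-1))\ge0$; your explicit justification of $\Delta-\delta\le n-2$ and the $n=2$ equality case is a small bonus.
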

\begin{proof}{
By Theorem~\ref{q1+q1bar}, it suffices to show that $$(\Delta-\delta)\left(2-\frac{\Delta-\delta+1}{n-1}\right)\le n-2.$$
Note that for any graph $G$, $\Delta-\delta\le n-2$. Let
$$f(x):=x\left(2-\frac{x+1}{n-1}\right).$$
Then $f'(x)=2-(2x+1)/(n-1)$ which is positive for $x<n-3/2$.
It follows that $$f(\Delta-\delta)\le f(n-2)=n-2,$$
as desired.
The equality holds if and only if $\Delta-\delta=n-2$. From the equality case in Theorem~\ref{q1+q1bar}, this is possible if and only if $G=K_{1,n-1}$.
}\end{proof}

Noting that $\frac{5}{18}\left(4+\sqrt{14}\right)\approx2.15$, the following proposition disproves Conjecture~\ref{q.qb}.
\begin{pro} For any positive integer $n$, there is a graph $H_n$ on $n$ vertices with
 $$q_1(H_n)q_1(\ov{H_n})=\frac{5}{18}\left(4+\sqrt{14}\right)n^2+O(n).$$
\end{pro}
\begin{proof}{
Let $n=6k+s$ for some positive integers $k$ and $-3\le s\le2$. Set $H_n=\ov{K_{n-k}}\vee K_k$, where `$\vee$' shows the join of two graphs.
Then, $\ov{H_n}=K_{n-k}\cup\ov{K_k}$ with $q_1(\ov{H_n})=2(n-k-1)$. The partition of $V(H_n)$ into the $k$-clique and the $(n-k)$-independent set is an equitable partition with the quotient matrix
$$\left(\begin{array}{cc} n+k-2& n-k \\ k& k \\\end{array}\right).$$
It follows that $$q_1(H_n)=\frac{n}{2}+k-1+\frac{1}{2}\sqrt{n^2+4nk-4n-4k^2+4}.$$
Substituting $k=(n-s)/6$ yields that
\begin{align*}
q_1(H_n)q_1(\ov{H_n})&=\frac{5n+s-6}{18}\left(4n-s-6+\sqrt{14n^2-4ns-36n-s^2+36}\right)\\
&=\frac{5}{18}\left(4+\sqrt{14}\right)n^2+O(n).
\end{align*}
}\end{proof}
Based on the above result, we pose the following.

\noindent{\bf Problem.} Let $\rho(n):=\max\{q_1(G)q_1(\ov G)\mid\hbox{$G$ is a simple graph of order $n$}\}$. Is it true that $\displaystyle\lim_{n\to\infty}\frac{\rho(n)}{n^2}$ exists and equals $\frac{5}{18}\left(4+\sqrt{14}\right)$?


\begin{thebibliography}{MM}
\bibitem{amin} A.T. Amin and S.L. Hakimi, Upper bounds on the order of a clique of a graph, {\em SIAM J. Appl. Math.} {\bf22} (1972), 569--573.
\bibitem{hans} M. Aouchiche and P. Hansen, A survey of Nordhaus--Gaddum type relations, {\em Discrete Appl. Math.} {\bf161}  (2013), 466--546.
\bibitem{btf} Y.-H. Bao, Y.-Y. Tan, and Y.-Z. Fan, The Laplacian spread of unicyclic graphs, {\em Appl. Math. Lett.} {\bf22} (2009),  1011--1015.
\bibitem{bh} A.E. Brouwer and W.H. Haemers, {\em Spectra of Graphs}, Springer, New York, 2012.
\bibitem{cw} Y.  Chen and L. Wang, The Laplacian spread of tricyclic graphs, {\em Electron. J. Combin.} {\bf16} (2009), Research Paper 80, 18 pp.
\bibitem{crsB} D.M. Cvetkovi\'c, P. Rowlinson, and S.K. Simi\'c, {\em An Introduction to the Theory of Graph Spectra}, Cambridge University Press, Cambridge, 2010.
\bibitem{das} K.Ch. Das, Maximizing the sum of squares of the degrees of a graph, {\em Discrete Math.} {\bf285} (2004), 57--66.
\bibitem{flt} Y.Z. Fan, S.D. Li, and Y.Y. Tan, The Laplacian spread of bicyclic graphs, {\em J. Math. Res. Exposition} {\bf30} (2010), 17--28.
\bibitem{fxwl} Y.-Z. Fan, J. Xu, Y. Wang, and D. Liang, The Laplacian spread of a tree, {\em Discrete Math. Theor. Comput. Sci.} {\bf10} (2008),  79--86.
\bibitem{fi} M. Fiedler, Algebraic connectivity of graphs, {\em Czechoslovak Math. J.} {\bf23(98)} (1973), 298--305.
\bibitem{lsl} P. Li, J.S. Shi, and R.L. Li, Laplacian spread of bicyclic graphs, (Chinese) {\em J. East China Norm. Univ. Natur. Sci. Ed.} {\bf2010}, 6--9.
\bibitem{l} Y. Liu, The Laplacian spread of cactuses, {\em Discrete Math. Theor. Comput. Sci.} {\bf12} (2010), 35--40.
\bibitem{lw} Y. Liu and L. Wang, The Laplacian spread of bicyclic graphs, {\em Advances in Mathematics\,(China)} {\bf40} (2011), 759--764.
\bibitem{mer}  R. Merris, A note on Laplacian graph eigenvalues, {\em Linear Algebra Appl.} {\bf285} (1988), 33--35.
\bibitem{niki} V. Nikiforov, Eigenvalue problems of Nordhaus--Gaddum type, {\em Discrete Math.} {\bf307} (2007), 774--780.
\bibitem{ng} E.A. Nordhaus and J. Gaddum, On complementary graphs, {\em Amer. Math. Monthly} {\bf63} (1956), 175--177.
\bibitem{nos} E. Nosal, {\em Eigenvalues of Graphs}, Master's thesis, University of Calgary, 1970.
\bibitem{ter} T. Terpai, Proof of a conjecture of V. Nikiforov, {\em Combinatorica} {\bf31} (2011), 739--754.
\bibitem{xm} Y. Xu and J. Meng, The Laplacian spread of quasi-tree graphs, {\em Linear Algebra Appl.} {\bf435} (2011), 60–66.
\bibitem{yl} Z. You and B. Liu, The Laplacian spread of graphs, {\em Czechoslovak Math. J.} {\bf62(137)} (2012),  155--168.
\bibitem{zsh} M. Zhai, J. Shu, and Y. Hong, On the Laplacian spread of graphs, {\em Appl. Math. Lett.} {\bf24} (2011), 2097--2101.



\end{thebibliography}
\end{document}